\documentclass[a4paper,11pt,leqno]{amsart}
\usepackage{latexsym}
\usepackage{amssymb} 
\usepackage{amsmath} 
\usepackage{amscd}
\usepackage{color}
\usepackage{mathtools}

\numberwithin{equation}{section} 

\newtheorem{theorem}{Theorem}[section]
\newtheorem{lemma}[theorem]{Lemma} 
\newtheorem{corollary}[theorem]{Corollary}
\newtheorem{proposition}[theorem]{Proposition} 
 
\newtheorem{remark}[theorem]{Remark}
\newtheorem{example}[theorem]{Example}

\newtheorem{question}[theorem]{Question}

\allowdisplaybreaks[3]

\newcommand{\bi}[2]{\binom{#1}{#2}}

\begin{document}


\title[Computations on the tautological basis]{Computations on the tautological basis of\\the cohomology ring of the Peterson variety}
\author {Yuito Hashimoto}
\address{Department of Applied Mathematics, Okayama University of
Science, 1-1 Ridai-cho, Kita-ku, Okayama, 700-0005, Japan}
\email{yuitohashimoto0811@gmail.com}

\begin{abstract}
It is known that the set of square free monomials on the Chern classes of the tautological line bundles over the Peterson variety forms an additive basis of its cohomology ring. We study the expansion formula for their products. In particular, we give a square free expansion of the products multiplying degree $2$ classes in terms of elementary symmetric polynomials and binomial coefficients.
\end{abstract}

\maketitle

\section{Introduction}
 Let $n$ be a positive integer and $Fl(\mathbb{C}^n)$ the flag variety of $\mathbb{C}^n$ which is the collection of nested sequence of linear subspaces of $\mathbb{C}^n\colon$
\begin{align*}
Fl(\mathbb{C}^n) \ = \ \lbrace V_{\bullet}=(V_0\subset V_1\subset V_2\subset\dotsb\subset V_n=\mathbb{C}^n)\mid \dim_{\mathbb{C}}V_i=i\ (1\le i\le n)\rbrace.
\end{align*} 
 Let $N$ be an $n\times n$ regular nilpotent matrix viewed as a linear map $N\colon\mathbb{C}^n\rightarrow\mathbb{C}^n$. The Peterson variety (of type $A_{n-1}$) is a subvariety of  $Fl(\mathbb{C}^n)$ defined by
\begin{align*}
\mathcal{Y}\coloneqq\lbrace V_{\bullet}\in Fl(\mathbb{C}^n)\mid NV_i\subset V_{i+1}\,(1\le i\le n)\rbrace, 
\end{align*}
where $NV_i$ denotes the image of $V_i$ under the map $N\colon\mathbb{C}^n\rightarrow\mathbb{C}^n$. It was introduced by Dale Peterson to study the quantum cohomology ring of $Fl(\mathbb{C}^n)$, and the structure of its cohomology ring has been studied quite well (e.g. \cite{b,j,h,i,g,l,m}). 

Let $x_i\in H^2(\mathcal{Y})\,(1\le i\le n)$ be the Chern class of (the dual of) the tautological line bundle on $\mathcal{Y}$, where $H^*(\mathcal{Y};{\mathbb Q})$ is the singular cohomology ring with ${\mathbb Q}$-coefficients. As is well-known, Peterson variety $\mathcal{Y}$ is an example of regular nilpotent Hessenberg variety. Hence, the result of Harada--Horiguchi--Murai--Precup--Tymoczko (\cite{k}) implies that the set of suquare free monomials 
\begin{align}\label{Petbas1}
\lbrace x_{i_1}\cdots x_{i_d}\mid 1\le i_1<i_2<\cdots<i_d\le n-1\ (1\le d\le n-1)\rbrace
\end{align}
is a basis of $H^{*}(\mathcal{Y})$ over ${\mathbb Q}$. We call \eqref{Petbas1} the \textbf{tautologicai basis} of $H^{*}(\mathcal{Y})$, and we write 
\begin{align*}
x_J\coloneqq x_{i_1}\cdots x_{i_d}\quad \text{for $J=\lbrace i_1<\ldots<i_d\rbrace\subseteq\lbrace 1,\cdots,n-1\rbrace$}.
\end{align*}
For $J,K\subseteq\lbrace 1,\ldots,n-1\rbrace$, we consider the structure constants for the multiplication rule :
\begin{align*}
x_J\cdot x_K \ = \ \sum_Lc_{J,K}^Lx_L,\hspace{20pt}c_{J,K}^L\in{\mathbb Q}.
\end{align*}
Interestingly, it follows from the computations in this paper that the structure constants are all \textit{integers}:
\begin{align*}
 c_{J,K}^L\in{\mathbb Z}.
\end{align*}
We consider the simple case $|J|=1$, and we provide a square free expansion formula by using elementary symmetric polynomials in $x_1,\ldots,x_{n-1}$ and  binomial coefficients (Theorem~\ref{xsqu} and Theorem~\ref{thm12}). 
We address that our formula gives an expansion by the tautological basis, but it does not necessarily give a formula for the coefficients $c_{J,K}^L$ directly (see Remark~\ref{remarkcoe} and Proposition~\ref{cor5.2}).
We list some open questions in Section~\ref{opque} including geometric or combinatorial meaning for these coefficients.

\vspace{10pt}

\noindent
\textbf{Acknowledgments.} The author is greatful to Hiraku Abe for valuable discussions and suggestions.

\section{Preliminaries}
Flag variety is defined as the set of nested sequence of linear subspaces in $\mathbb{C}^n$:
\begin{align*}
Fl(\mathbb{C}^n)=\lbrace V_{\bullet}=(V_0\subset V_1\subset V_2\subset\cdots\subset V_n=\mathbb{C}^n)\mid \dim_{\mathbb{C}}V_i=i\,(1\le i\le n)\rbrace.
\end{align*}
For $1\le i\le n$, the $i$-th tautological vector bundle (of rank $i$) over $Fl(\mathbb{C}^n)$ is defined as
\begin{align*}
U_i\coloneqq \lbrace (V_{\bullet},v)\in Fl(\mathbb{C}^n)\times\mathbb{C}^n\mid V_i\ni v\rbrace\hspace{15pt}(1\le i\le n).
\end{align*}
Then we have the following sequence of inclusions of tautological vector bundles:
\begin{align*}
0=U_0\subset U_1\subset U_2\subset\cdots\subset U_{n-1}\subset U_n=Fl(\mathbb{C}^n)\times\mathbb{C}^n .
\end{align*}
In this sequence, each successive quotient $U_i/U_{i-1}$ is a line bundle over $Fl(\mathbb{C}^n)$, and it is called a \textbf{tautological line bundle}. It has the associated first Chern class
\begin{align}\label{tau}
\tau_i\coloneqq -c_1(U_i/U_{i-1})\in H^{2}(Fl(\mathbb{C}^n))\hspace{15pt}(1\le i\le n).
\end{align}

Let us now describe the cohomology ring of the Peterson variety.
Let $N$ be the $n\times n$ nilpotent matrix in Jordan canonical form consisting of a single Jordan block:
\begin{align*}
N\coloneqq
\begin{pmatrix} 
  0 & 1 & 0 &\dots  & 0 \\
  0 & 0 & 1 &\dots  & 0 \\
  \vdots & \vdots & \vdots & \ddots & \vdots \\
  0 & 0 & 0 &\dots  & 1\\
  0 & 0 & 0 &\dots  & 0
\end{pmatrix} .
\end{align*}
Regarding $N$ as a linear map $N\colon \mathbb{C}^n\rightarrow\mathbb{C}^n$, the Peterson variety is defined as a subvariety of $Fl(\mathbb{C}^n)$ as follows:
\begin{align*}
\mathcal{Y}\coloneqq\lbrace V_{\bullet}\in Fl(\mathbb{C}^n)\mid NV_i\subset V_{i+1} \hspace{5pt}(1\le i\le n-1)\rbrace,
\end{align*}
where $NV_i$ denotes the image of $V_i$ under the linear map $N\colon \mathbb{C}^n\rightarrow\mathbb{C}^n$.

In this paper, $H^*(-;{\mathbb Q})$ denotes the singular cohomology ring with ${\mathbb Q}$-coefficients.
The inclusion $\pi\colon\mathcal{Y}\hookrightarrow Fl(\mathbb{C}^n)$ induces a ring homomorphism 
\begin{align*}
\pi^{*}\colon H^{*}(Fl(\mathbb{C}^n))\rightarrow H^{*}(\mathcal{Y}).
\end{align*}
According to Fukukawa--Harada--Masuda (\cite{a}), we set 
\begin{align}\label{vpidef}
p_k\coloneqq \sum_{i=1}^{k}\pi^{*}(\tau_i) \in H^{2}(\mathcal{Y}) \hspace{10pt}(1\le k\le n-1), 
\end{align}
where $\tau_i= -c_1(U_i/U_{i-1})\in H^{2}(Fl(\mathbb{C}^n))$ is the Chern class defined in \eqref{tau}. Then we have the following presentation of the cohomology ring of $\mathcal{Y}$.

\begin{proposition}\label{Pet}{\rm (\cite[Corollary 3.4]{a})}
There is a ring isomorphism 
\begin{align*}
\mathbb{Q}\lbrack \varpi_1,\ldots,\varpi_{n-1}\rbrack/J_{\varpi} \stackrel{\cong}{\rightarrow} H^{*}(\mathcal{Y})
\end{align*}
which sends $($the quotient image of\hspace{3pt}$)$ $\varpi_k$ to $p_k\in H^{2}(\mathcal{Y})$,
where $J_{\varpi}$ is the ideal generated by 
\begin{align*}
\varpi_k(-\varpi_{k-1}+2\varpi_k-\varpi_{k+1})\hspace{10pt}(1\le k\le n-1), 
\end{align*}
where we set $\varpi_0=\varpi_n=0$.
\end{proposition}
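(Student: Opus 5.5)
The plan is the standard one for presentations of this kind. Prove surjectivity and that the relations hold in $H^*(\mathcal{Y})$, then finish by a dimension count. Set $R=\mathbb{Q}[\varpi_1,\ldots,\varpi_{n-1}]/J_\varpi$ and $\phi\colon R\to H^*(\mathcal{Y})$, $\varpi_k\mapsto p_k$.

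\textbf{Step 1: the map is well defined.} We must show $p_k(-p_{k-1}+2p_k-p_{k+1})=0$ in $H^*(\mathcal{Y})$. Note that $-p_{k-1}+2p_k-p_{k+1}=\pi^*(\tau_k-\tau_{k+1})$. I would use the $S^1$-action on $\mathcal{Y}$ given by the one-parameter torus $t\mapsto \mathrm{diag}(t,t^2,\ldots,t^n)$, which normalizes $\mathbb{C}N$. The fixed points are finite, indexed by subsets of $\{1,\ldots,n-1\}$, and $\mathcal{Y}$ has an affine paving, so its cohomology is equivariantly formal. I would then work with equivariant lifts $p_k^{S}$, $\tau_i^{S}$ and check by localization at each fixed point $w_J$ that $p_k^S(\tau_k^S-\tau_{k+1}^S-\text{(equivariant correction } t))$ vanishes. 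At a fixed point, either the restriction of $p_k$ vanishes or $\tau_k-\tau_{k+1}$ restricts to the weight $t$. This is the kind of case check done by Harada--Tymoczko. Setting $t=0$ then gives the ordinary relation.

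\textbf{Step 2: surjectivity.} Surjectivity follows from $H^*(Fl)\to H^*(\mathcal{Y})$ being surjective; I would take this from the known results on regular nilpotent Hessenberg varieties. Since $\sum_{i\le n}\tau_i=0$, the classes $\pi^*\tau_i$ are generated by the $p_k$.

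\textbf{Step 3: dimension count.} We need $\dim_\mathbb{Q} R\le 2^{n-1}=\dim H^*(\mathcal{Y})$, where the latter is the number of fixed points, or equivalently the number of Peterson Schubert cells. For the upper bound, note that the relations give $\varpi_k^2=\tfrac12\varpi_k(\varpi_{k-1}+\varpi_{k+1})$. Using these as rewriting rules, any monomial reduces to a combination of square-free monomials in $\varpi$. The delicate point is termination, since reducing a square can create new squares; I would induct on a suitable order, for instance total degree together with the lexicographic order on exponent vectors, to show the reduction terminates. This yields at most $2^{n-1}$ spanning monomials. Combined with surjectivity and $\dim H^*(\mathcal{Y})=2^{n-1}$, $\phi$ is an isomorphism.

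The main obstacle is Step 1, in particular computing the restrictions of $\tau_i$ at the fixed points of $\mathcal{Y}$ and identifying the correct equivariant correction term. As a fallback, I would argue geometrically: $p_k$ is represented by a Peterson Schubert divisor on which the line bundle with class $\tau_k-\tau_{k+1}$ restricts trivially.
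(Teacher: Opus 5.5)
The paper gives no argument for this statement; it is quoted from Fukukawa--Harada--Masuda. Your plan (relations hold, surjectivity, dimension count) is sound, and Steps 1 and 2 are fine once you fix the equivariant lift $\tilde p_k=\sum_{i\le k}(\tau_i^S-\tau_i^S|_{e})$. This lift vanishes at $w_J$ for $k\notin J$, because $w_J$ then stabilizes $\{1,\dots,k\}$. For $k\in J$ one has $w_J(k)=w_J(k+1)+1$, so $\tau_k-\tau_{k+1}$ restricts to $\pm t$. So Step 1 is routine. The real difficulty is Step 3, and there your argument fails as stated.

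\textbf{The rewriting in Step 3 does not terminate.} The rule $\varpi_k^2\mapsto\tfrac12\varpi_k\varpi_{k-1}+\tfrac12\varpi_k\varpi_{k+1}$ fails to terminate under any order or strategy.
Total degree is constant, since $J_\varpi$ is generated by homogeneous quadrics, so it cannot serve as an induction parameter. Lexicographic order is not monotone either: one summand shifts an exponent to the left and the other to the right.
More decisively, the rewriting has cycles. Take $n\ge4$ and the monomial $\varpi_2^2\varpi_3^2$:
\begin{itemize}
\item Reducing $\varpi_2^2$ gives $\tfrac12\varpi_1\varpi_2\varpi_3^2+\tfrac12\varpi_2\varpi_3^3$.
\item The only possible reduction of $\varpi_2\varpi_3^3$ gives $\tfrac12\varpi_2^2\varpi_3^2+\tfrac12\varpi_2\varpi_3^2\varpi_4$.
\item Reducing $\varpi_3^2$ first instead leads symmetrically, through $\varpi_2^3\varpi_3$, back to $\varpi_2^2\varpi_3^2$.
\end{itemize}
So the monomial reappears with coefficient $\tfrac14$ whichever square you reduce, and no well-founded order decreases along every step. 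The missing idea is that you must \emph{solve} for the non-square-free monomials rather than rewrite them away. Here that means using $1-\tfrac14\neq 0$; in general it means using the invertibility of a type A Cartan matrix.

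\textbf{How to repair it.} Let $[a,b]$ be a maximal block of consecutive indices in the support of a square-free monomial. For $a-1\le i\le b+1$ put $Z_i=\varpi_i\varpi_a\cdots\varpi_b$. Then $Z_{a-1}$ and $Z_{b+1}$ are square-free, and they vanish if $a=1$, respectively $b=n-1$. The defining relations give $2Z_i=Z_{i-1}+Z_{i+1}$ for $a\le i\le b$. This discrete Dirichlet problem on a path has the unique solution
\[
Z_i=\tfrac{b-i+1}{b-a+2}\,Z_{a-1}+\tfrac{i-a+1}{b-a+2}\,Z_{b+1},
\]
which is exactly Proposition~\ref{varpisqu}. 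Since $a-1$ and $b+1$ lie outside the support, multiplying back by the remaining factors keeps both terms square-free. Hence $\varpi_i$ times any square-free monomial is a combination of square-free monomials, and induction on degree shows that the $2^{n-1}$ square-free monomials span $R$.

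Alternatively, the same harmonic-function argument with zero boundary values shows that the generators of $J_\varpi$ have no common zero in $\mathbb{C}^{n-1}$ other than $0$. These $n-1$ quadrics in $n-1$ variables therefore form a regular sequence. Then $R$ has Hilbert series $(1+q)^{n-1}$ (with $\deg\varpi_k=1$), so $\dim_{\mathbb{Q}}R=2^{n-1}$. With either repair, your three steps prove the proposition.
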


\vspace{10pt}

Due to this result, we identify (the quotient image of) $\varpi_k$ and the cohomology class $p_k$, and we regard
\begin{align*}
 \varpi_k \in H^{2}(\mathcal{Y})\ (1\le k\le n-1)
\end{align*}
in the rest of this paper. This does not cause problems since we always compute in the cohomology ring $H^{*}(\mathcal{Y})$ when we deal with these classes.

It is known that the square free monomials in $\varpi_1,\ldots,\varpi_{n-1}$ forms a basis of $H^{*}(\mathcal{Y})$ over ${\mathbb Q}$ (\cite{b,i,g}).
This means that an arbitrary monomial in $\varpi_1,\ldots,\varpi_{n-1}$ must be written as a linear combination of the ones which are square free. Among results in the literature, the next claim is useful for the computations in this paper.

\begin{proposition}{\rm (\cite[Lemma 5.1]{b})}\label{varpisqu}
For $1\le a\le i\le b\le n-1$, we have 
\begin{align}\label{varpiiaa+1}
\varpi_i(\varpi_a\varpi_{a+1}\cdots \varpi_b)=\frac{b-i+1}{b-a+2}\,\varpi_{a-1}\varpi_a\cdots \varpi_b+\frac{i-a+1}{b-a+2}\,\varpi_a\varpi_{a+1}\cdots \varpi_{b+1}
\end{align}
in $H^{*}(\mathcal{Y})$, where we set $\varpi_0=\varpi_n=0$.
\end{proposition}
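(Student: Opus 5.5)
The identity follows from the defining relations of Proposition~\ref{Pet} alone; no induction on $b-a$ is needed. Fix $1\le a\le b\le n-1$ and set $P\coloneqq \varpi_a\varpi_{a+1}\cdots\varpi_b$. For $a-1\le k\le b+1$ define
\begin{align*}
f(k)\coloneqq \varpi_kP\in H^{*}(\mathcal{Y}),
\end{align*}
using the convention $\varpi_0=\varpi_n=0$. The goal is to show that $k\mapsto f(k)$ is an affine function of $k$ on the integer interval $[a-1,b+1]$. The formula \eqref{varpiiaa+1} is then just linear interpolation between the two endpoint values $f(a-1)=\varpi_{a-1}\varpi_a\cdots\varpi_b$ and $f(b+1)=\varpi_a\cdots\varpi_b\varpi_{b+1}$.

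To see affineness, fix $k$ with $a\le k\le b$. The factor $\varpi_k$ occurs exactly once in $P$, so $P=\varpi_k\cdot P_k$, where $P_k\coloneqq\prod_{a\le j\le b,\ j\neq k}\varpi_j$ is again a monomial. By Proposition~\ref{Pet},
\begin{align*}
\varpi_k(-\varpi_{k-1}+2\varpi_k-\varpi_{k+1})=0
\end{align*}
in $H^{*}(\mathcal{Y})$, and this holds also for $k=1$ and $k=n-1$ with the convention $\varpi_0=\varpi_n=0$. Multiplying by $P_k$ gives $-\varpi_{k-1}P+2\varpi_kP-\varpi_{k+1}P=0$, that is,
\begin{align*}
f(k-1)-2f(k)+f(k+1)=0\qquad(a\le k\le b).
\end{align*}
So the second differences of $f$ vanish at every interior point of $[a-1,b+1]$. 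Hence $f(k)-f(k-1)$ is constant for $a\le k\le b+1$. Since the interval has length $b-a+2$, this gives
\begin{align*}
f(i)=\frac{b+1-i}{b-a+2}\,f(a-1)+\frac{i-a+1}{b-a+2}\,f(b+1)
\end{align*}
for every $a\le i\le b$. To check this, note that the right-hand side is affine in $i$ and agrees with $f$ at $i=a-1$ and $i=b+1$. An affine function on $[a-1,b+1]$ is determined by its two endpoint values, so the two sides agree everywhere. This is exactly \eqref{varpiiaa+1}.

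There is no substantive obstacle. The only point needing care is the boundary cases $a=1$ or $b=n-1$. There the endpoint terms $f(0)$ or $f(n)$ are zero, and the defining relations of $J_\varpi$ for $k=1$ and $k=n-1$ are stated precisely with $\varpi_0=\varpi_n=0$. So the recurrence and the interpolation remain valid verbatim, and one endpoint term in \eqref{varpiiaa+1} simply vanishes.
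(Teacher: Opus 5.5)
Your argument is correct and complete. The paper gives no proof of this statement; it is quoted from \cite[Lemma 5.1]{b}, so there is no internal argument to compare yours with.

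What you supply is a self-contained derivation from the presentation in Proposition~\ref{Pet} alone. Multiplying the generator $\varpi_k(-\varpi_{k-1}+2\varpi_k-\varpi_{k+1})$ of $J_\varpi$ by the complementary monomial $P_k$ shows that $k\mapsto\varpi_k\varpi_a\cdots\varpi_b$ has vanishing second differences at every $k\in[a,b]$. Hence it is affine on the integer points of $[a-1,b+1]$. Interpolating between the two endpoint values then gives exactly the coefficients $\frac{b-i+1}{b-a+2}$ and $\frac{i-a+1}{b-a+2}$.

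Your handling of the boundary cases $a=1$ and $b=n-1$ is right, since the relations for $k=1$ and $k=n-1$ are already stated with $\varpi_0=\varpi_n=0$. The single division by $b-a+2$ is harmless because the coefficients are in $\mathbb{Q}$.

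Compared with citing the lemma, your route buys two things. First, the identity becomes a formal consequence of the relations the paper already takes as its starting point, which removes one external dependency in a few lines and without induction on $b-a$. Second, the endpoint cases $i=a-1$ and $i=b+1$ recorded in Remark~\ref{remarkpi} come for free, since your affine formula holds on the whole closed interval $[a-1,b+1]$.
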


\vspace{10pt}

\begin{remark}\label{remarkpi}
{\rm When $i=a-1$, the coefficients of the first and the second summands in the right hand side become $\frac{b-i+1}{b-a+2}=1$ and $\frac{i-a+1}{b-a+2}=0$, respectively. Hence, \eqref{varpiiaa+1} also holds when $i=a-1$. Similarly, it holds when $i=b+1$.}
\end{remark}

\vspace{10pt}

We also consider the cohomology class
\begin{align}\label{xdef}
y_i\coloneqq \pi^{*}(\tau_i)\in H^{2}(\mathcal{Y}) \hspace{10pt}(1\le i\le n).
\end{align}
This is simply the restriction of the Chern class defined in \eqref{varpisqu}.
Recalling the definition of $p_k$ given in $(\ref{vpidef})$, we have
\begin{align*}
p_k=y_1+\cdots+y_k\hspace{10pt}(1\le k\le n).
\end{align*}
Now the ring presentation of $H^{*}(\mathcal{Y})$ given by Fukukawa--Harada--Masuda (\cite{a}) can be expressed in terms of $y_1,\ldots,y_n$ as follows.

\begin{corollary}\label{petx}
There is a ring isomorphism 
\begin{align*}
\mathbb{Q}\lbrack x_1,\ldots,x_n\rbrack/J_x 
\stackrel{\cong}{\rightarrow} H^{*}(\mathcal{Y})
\end{align*}
which sends $($the quotient image of\hspace{3pt}$)$ $x_i$ to $y_i\in H^{2}(\mathcal{Y})$ given in \eqref{xdef},
where $J_x$ is the ideal generated by 
\begin{align*}
(x_1+\cdots+x_k)(x_k-x_{k+1})\ (1\le k\le n-1),\quad x_1+\cdots+x_n.
\end{align*}
\end{corollary}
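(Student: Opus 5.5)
The approach is to pass through Proposition~\ref{Pet} by a linear change of variables. Consider the ring homomorphism
\begin{align*}
\phi\colon \mathbb{Q}[\varpi_1,\ldots,\varpi_{n-1}]\to \mathbb{Q}[x_1,\ldots,x_n]/J_x,\qquad \varpi_k\mapsto x_1+\cdots+x_k .
\end{align*}
It will be composed with the isomorphism of Proposition~\ref{Pet}. Under the identification $\varpi_k=p_k=y_1+\cdots+y_k$ in $H^*(\mathcal{Y})$, the desired map $x_i\mapsto y_i$ will then be the inverse of the induced map. So I will construct mutually inverse ring homomorphisms between $\mathbb{Q}[x_1,\ldots,x_n]/J_x$ and $\mathbb{Q}[\varpi_1,\ldots,\varpi_{n-1}]/J_\varpi$, and then compose with Proposition~\ref{Pet}.

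\textbf{Step 1: the map $\psi$.} Define
\begin{align*}
\psi\colon\mathbb{Q}[x_1,\ldots,x_n]\to\mathbb{Q}[\varpi_1,\ldots,\varpi_{n-1}]/J_\varpi,\qquad x_i\mapsto \varpi_i-\varpi_{i-1}\quad(1\le i\le n),
\end{align*}
with the conventions $\varpi_0=\varpi_n=0$. I check that $\psi$ kills $J_x$.
\begin{itemize}
\item The generator $x_1+\cdots+x_n$ maps to the telescoping sum $\varpi_n-\varpi_0=0$.
\item For $1\le k\le n-1$, the generator $(x_1+\cdots+x_k)(x_k-x_{k+1})$ maps to
\begin{align*}
\varpi_k\bigl((\varpi_k-\varpi_{k-1})-(\varpi_{k+1}-\varpi_k)\bigr)=\varpi_k(-\varpi_{k-1}+2\varpi_k-\varpi_{k+1}),
\end{align*}
which is exactly a generator of $J_\varpi$.
\end{itemize}
Hence $\psi$ descends to $\bar\psi\colon\mathbb{Q}[x]/J_x\to\mathbb{Q}[\varpi]/J_\varpi$.

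\textbf{Step 2: the map $\phi$.} I check that $\phi$ kills $J_\varpi$. Set $s_k=x_1+\cdots+x_k$. Then $s_0=0$, and $s_n=0$ holds in $\mathbb{Q}[x]/J_x$. The generator $\varpi_k(-\varpi_{k-1}+2\varpi_k-\varpi_{k+1})$ maps to
\begin{align*}
s_k\bigl((s_k-s_{k-1})-(s_{k+1}-s_k)\bigr)=s_k(x_k-x_{k+1}),
\end{align*}
which lies in $J_x$. The boundary case $k=n-1$ also works, since there $\varpi_n=0$ corresponds to $s_n=0$, and this holds modulo $J_x$. Hence $\phi$ descends to $\bar\phi\colon\mathbb{Q}[\varpi]/J_\varpi\to\mathbb{Q}[x]/J_x$.

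\textbf{Step 3: the maps are inverse.} It suffices to check this on generators.
\begin{itemize}
\item $\bar\psi\bar\phi(\varpi_k)=\sum_{i\le k}(\varpi_i-\varpi_{i-1})=\varpi_k$.
\item For $i<n$, $\bar\phi\bar\psi(x_i)=s_i-s_{i-1}=x_i$.
\item For $i=n$, $\bar\phi\bar\psi(x_n)=-s_{n-1}$, and $-s_{n-1}=x_n$ modulo $J_x$ because $x_1+\cdots+x_n\in J_x$.
\end{itemize}

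\textbf{Step 4: composing with Proposition~\ref{Pet}.} The composite of $\bar\psi$ with the isomorphism of Proposition~\ref{Pet} sends $x_i$ to $p_i-p_{i-1}=y_i$. For $i=n$ this needs $p_n=0$, that is, $y_1+\cdots+y_n=0$. This holds because $\sum_{i=1}^n\tau_i=-c_1(\mathbb{C}^n)=0$ for the trivial bundle, by the Whitney sum formula applied to the filtration $U_\bullet$.

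There is no real obstacle here. The only care needed is the boundary index $k=n-1$ and $i=n$, where the conventions $\varpi_n=0$ and $s_n=0$ must be matched using the generator $x_1+\cdots+x_n$ of $J_x$.
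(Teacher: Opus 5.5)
Your proof is correct and follows the paper's route: the paper obtains this corollary from Proposition~\ref{Pet} by the change of variables $\varpi_k = x_1+\cdots+x_k$ (equivalently $p_k=y_1+\cdots+y_k$), without writing out details. Your argument supplies what the paper leaves implicit, namely the explicit inverse maps, the boundary cases $k=n-1$ and $i=n$, and the vanishing $y_1+\cdots+y_n=\pi^*(-c_1(U_n))=0$ needed to send $x_n$ to $y_n$.
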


\vspace{10pt}

Similar to Proposition~\ref{Pet}, this makes us to identify (the quotient image of) $x_i$ and the cohomology class $y_i$, and we regard
\begin{align*}
 x_i \in H^{2}(\mathcal{Y})\ (1\le k\le n)
\end{align*}
in the rest of this paper. With this understanding, the above equality $p_k=y_1+\cdots+y_k$ takes of the form
\begin{align}\label{xvpiey}
\varpi_k=x_1+\cdots+x_k\hspace{10pt}(1\le k\le n).
\end{align}
In \cite{c}, Abe--Zeng generalized this relation to higher degree cases by using elementary symmetric polynomials as follows.

\begin{proposition}{\rm (\cite[Proposition 4.6]{c})}\label{piele}
For  $1\le a\le b\le n-1$, the following holds in $H^{*}(\mathcal{Y})$$:$
\begin{align*}
\underbrace{\varpi_a\varpi_{a+1}\cdots\varpi_b}_{\text{$k$}}=k!e_k(x_1,x_2,\ldots,x_b),
\end{align*}
where $k=b-a+1$.
\end{proposition}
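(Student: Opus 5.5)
We argue by induction on $k=b-a+1$, for all admissible $a$ at once. For $k=1$ we have $a=b$, and the claim $\varpi_b=e_1(x_1,\ldots,x_b)=x_1+\cdots+x_b$ is exactly \eqref{xvpiey}. For the inductive step, suppose the statement holds for products of $k-1$ consecutive classes, and fix $a<b$ with $b-a+1=k$.

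The idea is to split off $\varpi_a$, but only after rewriting it in a way that meshes with the elementary symmetric polynomials in $x_1,\ldots,x_b$. By the inductive hypothesis,
\begin{align*}
\varpi_{a+1}\cdots\varpi_b=(k-1)!\,e_{k-1}(x_1,\ldots,x_b).
\end{align*}
We then want to show
\begin{align*}
\varpi_a\cdot e_{k-1}(x_1,\ldots,x_b)=k\,e_k(x_1,\ldots,x_b)
\end{align*}
in $H^*(\mathcal{Y})$. Over the polynomial ring we have the Newton-type identity $(x_1+\cdots+x_b)e_{k-1}=k e_k+\sum_{i}x_i^2 e_{k-2}(\hat{x}_i)$, where $\hat{x}_i$ indicates that $x_i$ is omitted. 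So the square terms have to be removed using the relations of Corollary~\ref{petx}, and at the same time $\varpi_a$ has to be traded for $\varpi_b$.

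A cleaner route is to stay entirely in the $\varpi$'s. Apply Proposition~\ref{varpisqu} to $\varpi_b\cdot(\varpi_{a+1}\cdots\varpi_b)$, taking $i=b$ and the interval $[a+1,b]$, whose length is $k-1$. This gives
\begin{align*}
\varpi_b\,\varpi_{a+1}\cdots\varpi_b=\tfrac1k\,\varpi_a\cdots\varpi_b+\tfrac{k-1}{k}\,\varpi_{a+1}\cdots\varpi_{b+1}.
\end{align*}
The left side equals $(k-1)!\,e_1(x_1,\ldots,x_b)\,e_{k-1}(x_1,\ldots,x_b)$. The last term, by induction applied to $[a+1,b+1]$, equals $(k-1)!\,e_{k-1}(x_1,\ldots,x_{b+1})$ times $\varpi_{b+1}$ over the appropriate product; this is where the argument becomes messy. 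The clean alternative is instead to induct on $a$ decreasing from $b$, which has the same effect as the induction on $k$. Here we use Remark~\ref{remarkpi} with $i=a$, i.e. $i$ equal to the left endpoint of $[a+1,b]$ minus one, to get
\begin{align*}
\varpi_a\,(\varpi_{a+1}\cdots\varpi_b)=\varpi_a\cdots\varpi_b,
\end{align*}
which is trivially true and gives nothing. So the honest core is the identity in the polynomial ring modulo $J_x$:
\begin{align*}
(x_1+\cdots+x_a)\,e_{k-1}(x_1,\ldots,x_b)\equiv k\,e_k(x_1,\ldots,x_b)
\end{align*}
on the image of $\varpi_{a+1}\cdots\varpi_b$.

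The plan for this core step is to use the relations $\varpi_j x_j=\varpi_j x_{j+1}$, which follow from $(x_1+\cdots+x_j)(x_j-x_{j+1})\in J_x$. Since $\varpi_j$ divides the product $\varpi_{a+1}\cdots\varpi_b$ for $a<j\le b$, multiplying by this product lets us replace $x_j$ by $x_{j+1}$ freely. We therefore carry out the induction with the stronger statement that $\varpi_{a+1}\cdots\varpi_b$ annihilates $x_j-x_{j+1}$ for $a<j\le b$. Inside expressions multiplied by this product, the variables $x_{a+1},\ldots,x_{b+1}$ then behave as a single variable $t$, so that all squares $x_j^2$ with $j>a$ can be computed explicitly. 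Expanding $e_{k-1}(x_1,\ldots,x_b)$ in terms of $t$ and of $e_\bullet(x_1,\ldots,x_a)$, and comparing coefficients, reduces the core identity to a binomial identity, together with the relation $\varpi_a(2\varpi_a-\varpi_{a-1}-\varpi_{a+1})=0$ to handle $\varpi_a^2$.

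We expect this last bookkeeping step to be the main obstacle: keeping track of which relations are available after multiplying by $\varpi_{a+1}\cdots\varpi_b$, and verifying that the resulting binomial coefficients combine to give exactly $k$.
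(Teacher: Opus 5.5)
Your proposal has a genuine gap: the inductive step is never carried out, and the mechanism you sketch for it cannot work. Splitting off $\varpi_a$ on the left gives no information. As you noticed, Remark~\ref{remarkpi} with $i=a$ is a tautology. So your ``core identity'' $\varpi_a\,e_{k-1}(x_1,\ldots,x_b)=k\,e_k(x_1,\ldots,x_b)$ is just the proposition restated through the inductive hypothesis, not a reduction of it. For that identity you propose to use that $\varpi_{a+1}\cdots\varpi_b$ kills $x_j-x_{j+1}$ for $a<j\le b$. This is immediate from $\varpi_j(x_j-x_{j+1})=0$, but it only lets you replace $x_{a+1},\ldots,x_{b+1}$ by a common $t$ inside an expression of the form $(\varpi_{a+1}\cdots\varpi_b)\cdot g$. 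In $\varpi_a\cdot\varpi_{a+1}\cdots\varpi_b$ the cofactor $g=x_1+\cdots+x_a$ involves none of these variables, so nothing changes. Neither $e_k(x_1,\ldots,x_b)$ nor the polynomial $e_{k-1}(x_1,\ldots,x_b)$ is presented as such a multiple; that $k!\,e_k(x_1,\ldots,x_b)$ is one is exactly the claim. So substituting $t$ into their monomials is not justified. The ``expand in $t$ and compare coefficients'' step therefore has no valid starting point, and the binomial identity it should produce is never identified.

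The missing idea is to induct on the right endpoint $b$ and peel off $x_b$ on the $x$-side. Your aborted computation with $i=b$ used the right identity in the wrong position. Write $P_{a,b}\coloneqq\varpi_a\cdots\varpi_b$. Applied instead to the interval $[a,b-1]$ with $i=b-1$, Proposition~\ref{varpisqu} gives $\varpi_{b-1}P_{a,b-1}=\frac1kP_{a-1,b-1}+\frac{k-1}{k}P_{a,b}$. Now the extra term has the smaller endpoint $b-1$ instead of $b+1$. Assume the claim for all intervals ending at $b-1$; the case $a=1$ is covered by $P_{0,b-1}=0=e_b(x_1,\ldots,x_{b-1})$. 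For $k\ge2$, using \eqref{convention of e 2} and $x_b=\varpi_b-\varpi_{b-1}$, one gets
\begin{align*}
k!\,e_k(x_1,\ldots,x_b)&=k!\,e_k(x_1,\ldots,x_{b-1})+k\,x_b\,(k-1)!\,e_{k-1}(x_1,\ldots,x_{b-1})\\
&=P_{a-1,b-1}+k(\varpi_b-\varpi_{b-1})P_{a,b-1}\\
&=P_{a-1,b-1}+kP_{a,b}-\Bigl(P_{a-1,b-1}+(k-1)P_{a,b}\Bigr)=P_{a,b}.
\end{align*}
The case $k=1$ is \eqref{xvpiey}. The paper itself gives no proof of this statement; it quotes \cite[Proposition 4.6]{c}. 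This short argument, taking Proposition~\ref{varpisqu} as given as the paper does, is what your inductive step should be replaced by.
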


\vspace{10pt}

Noticing that the Peterson variety $\mathcal{Y}$ is an example of regular nilpotent Hessenberg variety, we can use the following result of Harada--Horiguchi--Murai--Precup--Tymoczko.

\begin{proposition}\label{xbase}
$($\cite[Corollary7.3]{k}$)$
The following set of square free monomials in $x_1,\ldots,x_{n-1}$ forms a basis of $H^{*}(\mathcal{Y})$ over ${\mathbb Q}$$:$
\begin{align}\label{Petbas}
\lbrace x_{i_1}\cdots x_{i_m}\mid 1\le i_1<i_2<\cdots<i_m\le n-1\ (1\le m\le n-1)\rbrace.
\end{align}
\end{proposition}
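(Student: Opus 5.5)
Since the square free monomials in $\varpi_1,\ldots,\varpi_{n-1}$ are already known to form a basis of $H^{*}(\mathcal{Y})$, we have $\dim_{\mathbb Q}H^{2d}(\mathcal{Y})=\binom{n-1}{d}$ for each $d$. This is exactly the number of square free monomials of degree $d$ in $x_1,\ldots,x_{n-1}$. It therefore suffices to prove that these monomials \emph{span} $H^{*}(\mathcal{Y})$, and we can work degree by degree. Linear independence then follows from the dimension count.

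To prove spanning, we work in the presentation of Corollary~\ref{petx}. First we use $x_n=-(x_1+\cdots+x_{n-1})$ to eliminate $x_n$, so $H^{*}(\mathcal{Y})$ is generated as a ring by $x_1,\ldots,x_{n-1}$. The relations, rewritten as reduction rules for squares, read
\begin{align*}
x_k^2 \;=\; x_kx_{k+1}-(x_1+\cdots+x_{k-1})(x_k-x_{k+1}) \qquad (1\le k\le n-2).
\end{align*}
For $k=n-1$ we substitute $x_n=-\varpi_{n-1}$ and obtain $\varpi_{n-1}(\varpi_{n-1}+x_{n-1})=0$. Expanding $\varpi_{n-1}=\varpi_{n-2}+x_{n-1}$ gives
\begin{align*}
2x_{n-1}^2 \;=\; -3\varpi_{n-2}x_{n-1}-\varpi_{n-2}^2.
\end{align*}
The right-hand side has smaller exponent of $x_{n-1}$. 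The plan is to show that repeated application of these rules rewrites every monomial $x^{\alpha}$ into a $\mathbb{Q}$-combination of square free monomials.

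The termination argument is the main obstacle. The rule for $x_k^2$ with $k\le n-2$ pushes weight toward the larger index $k+1$ (via $x_kx_{k+1}$ and $x_ix_{k+1}$), while the rule for $x_{n-1}^2$ pushes weight back toward smaller indices. I would order monomials first by the exponent of $x_{n-1}$, then by that of $x_{n-2}$, and so on, and argue by a double induction.
\begin{itemize}
\item The outer induction is on the exponent $a_{n-1}$ of $x_{n-1}$. The $x_{n-1}^2$-rule strictly lowers it, and the other rules never raise it beyond $1$ above the current value in the index-$(n-1)$ slot only when $k=n-2$. Those occurrences are handled by treating $x_{n-2}x_{n-1}$-type monomials inside the induction on the block of top indices.
\item Inside a fixed value of $a_{n-1}$, we run an induction on the vector $(a_{n-2},\ldots,a_1)$ in reverse lexicographic order. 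The rule for $x_k^2$ replaces $x_k^2$ by terms in which the exponent of $x_k$ drops and only exponents of larger indices grow.
\end{itemize}
If this weight argument proves awkward, the fallback is to induct on $n$. The relations for $k\le n-2$ are those of the Peterson variety of rank $n-2$ together with the extra variable $x_{n-1}$, so one can first reduce the part in $x_1,\ldots,x_{n-2}$ by induction and then handle the power of $x_{n-1}$ using the last relation.

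Alternatively, one could seek a triangularity statement: expand $x_J=\prod_{i\in J}(\varpi_i-\varpi_{i-1})$ and reduce the non square free terms by Proposition~\ref{varpisqu}. The goal would be to show that the transition matrix to the basis $\{\varpi_J\}$ is unitriangular for a suitable order on subsets. I would keep this as a cross-check in small ranks.
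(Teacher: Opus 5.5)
Your dimension count is correct: the $\varpi$-basis gives $\dim H^{2d}(\mathcal{Y})=\binom{n-1}{d}$. So everything rests on spanning, and there the rewriting argument has a genuine gap that no choice of order can close.

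Already for $n=3$ your rules are $x_1^2=x_1x_2$ and $2x_2^2=-3x_1x_2-x_1^2$. Each monomial below has only one square, so the moves are forced:
\begin{align*}
x_1^2x_2 \ \longrightarrow\ x_1x_2^2 \ \longrightarrow\ -\tfrac32x_1^2x_2-\tfrac12x_1^3 \ \longrightarrow\ -2x_1^2x_2 .
\end{align*}
The process returns to its starting monomial. The true identity $x_1^2x_2=0$ only comes from solving the linear equation $3x_1^2x_2=0$. No well-founded order can make every step decrease, since it would need $x_1^2x_2>x_1x_2^2>x_1^2x_2$. In general the $x_{n-2}^2$-rule raises the exponent of $x_{n-1}$, and the $x_{n-1}^2$-rule recreates $x_{n-2}^2$ inside $\varpi_{n-2}^2$.

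The obstruction is structural, not an artifact of the example. All your rules have coefficients in $\mathbb{Z}[\tfrac12]$. A terminating reduction built from them would therefore also show that square free monomials span $\mathbb{F}_3[x_1,x_2,x_3]/J_x$ for $n=3$. That is false. Modulo $3$ the point $x_1=x_2=x_3=1$ kills every generator of $J_x$: they evaluate to $0$, $2\cdot 0$ and $3$. Since $J_x$ is homogeneous, evaluation at this point is a linear functional on each degree $d$ that vanishes on $(J_x)_d$ and sends $x_1^d$ to $1$. So $x_1^d\neq0$ for all $d$, while there are no square free monomials of degree $\ge3$ in $x_1,x_2$. Any proof over $\mathbb{Q}$ must therefore divide by integers other than $2$, which pure rewriting with your rules never does.

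Your fallback has the same defect as long as it uses only these rules. Also, the relations for $k\le n-2$ do not present the smaller Peterson variety, because the linear relation $x_1+\cdots+x_{n-1}=0$ is missing.

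The paper does not prove this statement; it quotes \cite[Corollary 7.3]{k}. A self-contained repair with the paper's tools is to reverse the direction and show that each square free $\varpi_J$ lies in the span of the square free $x_L$ with $L\subseteq\{1,\ldots,n-1\}$. Then your dimension count finishes the proof.
\begin{itemize}
\item For a single run, Proposition~\ref{piele} gives $\varpi_a\cdots\varpi_b=(b-a+1)!\,e_{b-a+1}(x_1,\ldots,x_b)$.
\item For $J$ with maximal runs $[a_1,b_1],\ldots,[a_r,b_r]$, where $b_t+2\le a_{t+1}$, induct on $r$. Write $\varpi_J=\varpi_{J'}\varpi_{a_r}\cdots\varpi_{b_r}$, with $\varpi_{J'}$ a combination of $x_{l_1}\cdots x_{l_s}$ and $l_s\le b_{r-1}<a_r$.
\item Apply the iterated Corollary~\ref{lkascor}, exactly as in the proof of Theorem~\ref{thm12}. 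This gives $x_{l_1}\cdots x_{l_s}\,e_k(x_1,\ldots,x_{b_r})=e_k^{(l_1,\ldots,l_s)}(x_1,\ldots,x_{b_r})$, which is square free.
\end{itemize}
The denominators in Propositions~\ref{varpisqu} and~\ref{piele} supply exactly the division your rewriting lacks.

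Your triangularity alternative also needs adjusting: the transition matrix is not unitriangular. For example, $x_1x_2=\tfrac12\varpi_1\varpi_2$ when $n=3$, so at best it is triangular with nonzero diagonal.
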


\vspace{5pt}

We note that $x_n$ does not appear in \eqref{Petbas}. This can be understood from the linear relation $x_1+\cdots+x_n(=\varpi_n)=0$ in $H^{2}(\mathcal{Y})$ (see Corollary~\ref{petx}).
Since the basis in \eqref{Petbas} uses Chern classes of tautological line bundles, we call this basis the \textbf{tautological basis} of $H^{*}(\mathcal{Y})$.

\begin{example}
{\rm
Let $n=4$. Then the tautological basis of $H^{*}(\mathcal{Y})$ consists of 
\begin{align*}
 1,\ \ x_1,\ x_2,\ x_3,\ \ 
 x_1x_2,\ x_1x_3,\ x_2x_3,\ \ x_1x_2x_3.
\end{align*}
}
\end{example}

\vspace{10pt}

Proposition~\ref{xbase} shows that an arbitrary monomial in $x_1,\ldots,x_{n-1}$ in $H^{*}(\mathcal{Y})$ can be written as a linear combination of square free monomials in $x_1,\ldots,x_{n-1}$ with rational coefficients.
However, it does not provide an explicit formula for these coefficients.
Interestingly, these coefficients are all \textit{integers}.
In this paper, we give an explicit expansion formula for a monomial which containing a single square.

\vspace{10pt}

We end this section by recording an interesting property of an alternating sum of products of binomial coefficients.
\begin{lemma}\label{nikou}
For $d>b\ge0$, we have
\begin{align*}
\sum_{i=b}^{d-1}(-1)^{i-b}\bi{i}{b}\bi{d}{i+1} \ = \ 1.
\end{align*}
\end{lemma}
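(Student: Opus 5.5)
The identity is purely combinatorial: for $d>b\ge 0$,
\[
\sum_{i=b}^{d-1}(-1)^{i-b}\bi{i}{b}\bi{d}{i+1}=1 .
\]
I plan to prove it by induction on $d$, with $b$ fixed, using Pascal's rule on the factor $\bi{d}{i+1}$. Write
\[
S(d,b)\coloneqq\sum_{i=b}^{d-1}(-1)^{i-b}\bi{i}{b}\bi{d}{i+1}.
\]
The base case is $d=b+1$. Only $i=b$ occurs, and the term is $\bi{b}{b}\bi{b+1}{b+1}=1$.

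For the inductive step, suppose $d>b+1$. Expand $\bi{d}{i+1}=\bi{d-1}{i+1}+\bi{d-1}{i}$ to get $S(d,b)=A+B$, where
\[
A=\sum_{i=b}^{d-1}(-1)^{i-b}\bi{i}{b}\bi{d-1}{i+1},\qquad B=\sum_{i=b}^{d-1}(-1)^{i-b}\bi{i}{b}\bi{d-1}{i}.
\]
The $i=d-1$ term of $A$ vanishes, so $A=S(d-1,b)=1$ by induction. It therefore remains to show $B=0$.

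For $B$, use the subset-of-a-subset identity $\bi{d-1}{i}\bi{i}{b}=\bi{d-1}{b}\bi{d-1-b}{i-b}$. This gives
\[
B=\bi{d-1}{b}\sum_{j=0}^{d-1-b}(-1)^j\bi{d-1-b}{j}=\bi{d-1}{b}(1-1)^{d-1-b}=0,
\]
and the last equality holds because $d-1-b\ge1$. This completes the induction.

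The step that needs care is the bookkeeping of the summation ranges. One must check that the boundary term dropped from $A$ really vanishes, and that the hypothesis $d-1>b$ is what makes the binomial sum in $B$ zero rather than $1$. These checks are exactly where the condition $d>b$ is used.

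As an alternative, one could write $\bi{d}{i+1}=\sum_{m=i}^{d-1}\bi{m}{i}$ (the hockey stick identity) and swap the order of summation. The inner sum $\sum_{i=b}^{m}(-1)^{i-b}\bi{i}{b}\bi{m}{i}=\delta_{m,b}$ then leaves exactly $1$. I would keep this as a cross-check.
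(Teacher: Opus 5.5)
Your proof is correct and is essentially the paper's Appendix argument: induction on $d$, Pascal's rule on $\bi{d}{i+1}$, the induction hypothesis for one piece, and $\bi{d-1}{i}\bi{i}{b}=\bi{d-1}{b}\bi{d-1-b}{i-b}$ together with $(1-1)^{d-1-b}=0$ for the other. Your base case $d=b+1$ with $b$ fixed is in fact slightly tidier than the paper's base case $d=1$. The paper's inductive step from $d$ to $d+1$ does not properly cover $b=d$: there the first sum is empty, so it gives $0$ rather than the claimed $1$, and $\sum_{j=0}^{l}(-1)^j\bi{l}{j}$ with $l=0$ equals $1$ rather than $0$, so the two slips only cancel.
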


\vspace{10pt}

\begin{example}
{\rm
Let $d=6$ and $b=2$. Then we have 
\begin{align*}
\sum_{i=2}^5(-1)^{i-2}\bi{i}{2}\bi{6}{i+1} 
\ &= \ \bi{2}{2}\bi{6}{3}-\bi{3}{2}\bi{6}{4}+\bi{4}{2}\bi{6}{5}-\bi{5}{2}\bi{6}{6}\ = \ 1.
\end{align*}
}
\end{example}

\vspace{10pt}

In Appendix, we give a proof of Lemma~\ref{nikou} for the reader since the author could not find a suitable reference.

\vspace{20pt}

\section{Consecutive case}\label{connected case}
In this section, we study the expansion of the monomial
\begin{align}\label{conxeq}
x_c(x_ax_{a+1}\cdots x_b) \qquad \text{for \quad $1\le a\le c\le b\le n-1$}
\end{align}
having consecutive indexes and a unique square (in $x_c$) by the tautological basis of $H^*(\mathcal{Y})$. The cases for general monomials will be treated in the next section.

For $1\le d\le n$ and $J=\lbrace i_1<\cdots<i_d\rbrace\subseteq\lbrace1,\ldots,n\rbrace$, we set
\begin{align}\label{xaxb1sq}
x_J\coloneqq x_{i_1}x_{i_2}\cdots x_{i_d}.
\end{align}
We also write $x_J=x_{ i_1,\ldots,i_d}$ for simplicity.
As a convention, we also set $x_{ i_1,\ldots,i_d}=1$ when $d=0$.
As we saw in the previous section, the cohomology classes $x_1,\ldots,x_n\in H^2(\mathcal{Y})$ are linearly dependent by the relation $x_1+\cdots+x_n=0$, but for the moment we do not exclude $x_n$. Namely, we first study an expansion formula for \eqref{xaxb1sq} by square free monomials in $x_1,\ldots,x_n$. \\

We prepare two results which we use in this paper. 

\begin{lemma}\label{s-k+1ls}
Let $k\ge 1$ and $1\le m\le n-1$. If $m-k+1\le l\le m$, the following holds in $H^{*}(\mathcal{Y})$$:$
\begin{align*}
x_le_k(x_1,\ldots,x_m) \ = \ e_k(x_1,\ldots,x_m)x_{m+1}.
\end{align*}
\end{lemma}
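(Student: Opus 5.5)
Since $x_l = \varpi_l - \varpi_{l-1}$ and $x_{m+1}=\varpi_{m+1}-\varpi_m$ by \eqref{xvpiey}, I would translate everything into the $\varpi$-variables. By Proposition~\ref{piele} with $a=m-k+1$, $b=m$, we have
\begin{align*}
k!\,e_k(x_1,\ldots,x_m)=\varpi_{m-k+1}\varpi_{m-k+2}\cdots\varpi_m .
\end{align*}
Note that $k\le m$ is needed for $a\ge1$. If $k>m$, both sides vanish trivially since $e_k(x_1,\ldots,x_m)=0$. So assume $1\le k\le m$ and set $a=m-k+1$, $b=m$. After multiplying by $k!$, it suffices to prove
\begin{align*}
(\varpi_l-\varpi_{l-1})\,\varpi_a\cdots\varpi_b=(\varpi_{b+1}-\varpi_b)\,\varpi_a\cdots\varpi_b
\end{align*}
for $a\le l\le b$.

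Next I compute each product $\varpi_i\cdot\varpi_a\cdots\varpi_b$ using Proposition~\ref{varpisqu}, together with Remark~\ref{remarkpi} for the boundary cases $i=a-1$ and $i=b+1$. Write $A=\varpi_{a-1}\varpi_a\cdots\varpi_b$, $B=\varpi_a\cdots\varpi_{b+1}$ and $D=b-a+2$. Then
\begin{align*}
\varpi_i\,\varpi_a\cdots\varpi_b=\tfrac{b-i+1}{D}A+\tfrac{i-a+1}{D}B \qquad (a-1\le i\le b+1).
\end{align*}
Taking the difference for $i=l$ and $i=l-1$ gives $\tfrac{1}{D}(B-A)$, which is independent of $l$. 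The same formula for $i=b+1$ and $i=b$ gives exactly the same difference $\tfrac{1}{D}(B-A)$. Hence both sides agree, proving the lemma.

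The only point needing care is the range of indices. We need $l-1\ge a-1$, which holds since $l\ge a$, and we need $b+1\le n$. When $m=n-1$ we get $b+1=n$, and $\varpi_n=0$ is consistent with the convention in Proposition~\ref{varpisqu}. Likewise, when $a=1$ the term $\varpi_0=0$ is consistent with that convention. I expect no real obstacle beyond checking these boundary conventions. The key observation is that the coefficients in \eqref{varpiiaa+1} are affine in $i$, so consecutive differences are constant.
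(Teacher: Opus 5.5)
Your proof is correct and follows essentially the same route as the paper: write $x_l=\varpi_l-\varpi_{l-1}$ and $k!\,e_k(x_1,\ldots,x_m)=\varpi_{m-k+1}\cdots\varpi_m$ via \eqref{xvpiey} and Proposition~\ref{piele}, then apply Proposition~\ref{varpisqu} (with Remark~\ref{remarkpi}) to get $\frac{1}{(k+1)!}\left(\varpi_{m-k+1}\cdots\varpi_{m+1}-\varpi_{m-k}\cdots\varpi_m\right)$. The only difference is the last step. The paper converts this back to $e_{k+1}(x_1,\ldots,x_{m+1})-e_{k+1}(x_1,\ldots,x_m)=e_k(x_1,\ldots,x_m)x_{m+1}$ using Proposition~\ref{piele}. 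You instead expand $(\varpi_{m+1}-\varpi_m)\varpi_{m-k+1}\cdots\varpi_m$ with the same affine formula, which has the small advantage that, when $m=n-1$, you never need Proposition~\ref{piele} with upper index $n$, outside its stated range.
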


\begin{proof}
When $k>m$, both sides are $0$ so that the claim holds.
Hence, we may assume that $k\le m$ below.
By the assumption $m-k+1\le l\le m$, we can write $l=m-k+i$ for some $1\le i\le k$.
Then we have
\begin{align*}
x_le_k(x_1,\ldots,x_m) \ = \ x_{m-k+i}e_k(x_1,\ldots,x_m).
\end{align*}
By $x_{m-k+i}=\varpi_{m-k+i}-\varpi_{m-k+i-1}$ (see \eqref{xvpiey}) and Proposition~\ref{piele}, we obtain
\begin{align*}
x_le_k(x_1,\ldots,x_m) \ &= \ (\varpi_{m-k+i}-\varpi_{m-k+i-1})\frac{1}{k!}\varpi_{m-k+1}\cdots\varpi_m\\
\ &= \ \frac{1}{k!}\Big(\varpi_{m-k+i}(\varpi_{m-k+1}\cdots\varpi_m)-\varpi_{m-k+i-1}(\varpi_{m-k+1}\cdots\varpi_m)\Big).
\end{align*}
Applying Proposition~\ref{varpisqu} (with Remark~\ref{remarkpi} in mind) to the right hand side, we obtain
\begin{align*}
x_le_k(x_1,\ldots,x_m) 
\ &= \ \frac{1}{k!}\Big(\frac{k-i+1}{k+1}\varpi_{m-k}\cdots\varpi_m+\frac{i}{k+1}\varpi_{m-k+1}\cdots\varpi_{m+1}\\
&\hspace{60pt}-\frac{k-i+2}{k+1}\varpi_{m-k}\cdots\varpi_m-\frac{i-1}{k+1}\varpi_{m-k+1}\cdots\varpi_{m+1}\Big)\\
\ &= \ \frac{1}{(k+1)!}(-\varpi_{m-k}\cdots\varpi_m+\varpi_{m-k+1}\cdots\varpi_{m+1})\\[-13pt]\\
\ &= \ -e_{k+1}(x_1,\ldots,x_m)+e_{k+1}(x_1,\ldots,x_{m+1})\hspace{15pt}(\text{by Proposition~\ref{piele}})\\[-13pt]\\
\ &= \ e_k(x_1,\ldots,x_m)x_{m+1}.
\end{align*}
Thus, the claim holds.
\end{proof}

\vspace{10pt}
For the elementary symmetric polynomial of degree $0$, 
we set
\begin{align}\label{convention of e}
e_0(x_1,\ldots,x_m) \ = \ 1 \qquad (1\le m\le n).
\end{align}
We also set
\begin{align}\label{convention of 1.5}
\text{When $m=0$ \ \ : \ \ $e_0(x_1,\ldots,x_m)=1$ and $e_i(x_1,\ldots,x_m)=0$ $(i\ge1)$}
\end{align}
as convention.
Then we have the well-known identity
\begin{align}\label{convention of e 2}
 e_i(x_1,\ldots,x_{t-1})x_t + e_{i+1}(x_1,\ldots,x_{t-1}) = e_{i+1}(x_1,\ldots,x_t) 
\end{align}
for $i\ge 0$ and $1\le t\le n$ as is well-known. The only non-trivial cases are the case $i=0$ and the case $i\ge 1$ with $t=1$.

\begin{lemma}\label{lem5}
For $i\ge 0$ and $1\le t\le n-1$, the following holds in $H^{*}(\mathcal{Y})$$:$
\begin{align*}
e_i(x_1,\ldots,x_{t-1})x_t^2 \ = \ -e_{i+1}(x_1,\ldots,x_{t-1})x_t+e_{i+1}(x_1,\ldots,x_t)x_{t+1}.
\end{align*}
\end{lemma}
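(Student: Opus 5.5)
The plan is to write $x_t^2=x_t\cdot x_t$ and rewrite $e_i(x_1,\ldots,x_{t-1})x_t$ with the identity \eqref{convention of e 2}:
\begin{align*}
e_i(x_1,\ldots,x_{t-1})x_t \ = \ e_{i+1}(x_1,\ldots,x_t)-e_{i+1}(x_1,\ldots,x_{t-1}).
\end{align*}
Multiplying by $x_t$ gives
\begin{align*}
e_i(x_1,\ldots,x_{t-1})x_t^2 \ = \ x_t\,e_{i+1}(x_1,\ldots,x_t)-e_{i+1}(x_1,\ldots,x_{t-1})x_t .
\end{align*}
The second term is already the first summand of the claimed right-hand side. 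So it remains only to show that
\begin{align*}
x_t\,e_{i+1}(x_1,\ldots,x_t)=e_{i+1}(x_1,\ldots,x_t)x_{t+1}.
\end{align*}

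This is exactly Lemma~\ref{s-k+1ls}, applied with $k=i+1\ge1$, $m=t$ and $l=t$. The hypotheses hold: $1\le m=t\le n-1$ is given, and $m-k+1=t-i\le t=l\le m$ because $i\ge0$. Lemma~\ref{s-k+1ls} itself already covers the degenerate situation $k>m$, where both sides vanish.

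The point that needs some care is the edge cases of the conventions \eqref{convention of e} and \eqref{convention of 1.5}. The case $t=1$ forces $e_{i+1}(x_1,\ldots,x_0)=0$ and $e_i(x_1,\ldots,x_0)=\delta_{i,0}$. The case $i=0$ uses $e_0=1$. For both, I will check that identity \eqref{convention of e 2} is valid, which the paper asserts for all $i\ge0$ and $1\le t\le n$. Once those are confirmed, the argument is a two-line combination of \eqref{convention of e 2} and Lemma~\ref{s-k+1ls}, and I do not expect any genuine obstacle.
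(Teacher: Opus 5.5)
Your proposal is correct and is essentially the paper's proof: the paper also moves $-e_{i+1}(x_1,\ldots,x_{t-1})x_t$ to the left, factors out $x_t$, collapses the bracket to $e_{i+1}(x_1,\ldots,x_t)$ via \eqref{convention of e 2}, and finishes with Lemma~\ref{s-k+1ls} applied to $x_t e_{i+1}(x_1,\ldots,x_t)$, i.e.\ with $k=i+1$ and $m=l=t$. Your explicit check of the hypothesis $t-i\le t\le t$ and of the edge cases $t=1$ and $i=0$ is a useful addition that the paper leaves implicit.
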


\begin{proof}
Let us rather prove the following equivalent equality:
\begin{align*}
e_i(x_1,\ldots,x_t)x_{t+1}^2 \ +\ e_{i+1}(x_1,\ldots,x_t)x_{t+1}
\ = \ e_{i+1}(x_1,\ldots,x_{t+1})x_{t+2}.
\end{align*}
The left hand side can be computed as 
\begin{align*}
&e_i(x_1,\ldots,x_{t-1})x_t^2 \ +\ e_{i+1}(x_1,\ldots,x_{t-1})x_t \\
&\hspace{20pt}\ = \ \Big( e_i(x_1,\ldots,x_{t-1})x_t + e_{i+1}(x_1,\ldots,x_{t-1}) \Big)x_t \\
&\hspace{20pt}\ = \ \Big( e_{i+1}(x_1,\ldots,x_t) \Big)x_t \qquad \text{(by \eqref{convention of e 2})}\\
&\hspace{20pt}\ = \ e_{i+1}(x_1,\ldots,x_t)x_{t+1} ,
\end{align*}
where we used Lemma~\ref{s-k+1ls}.
Hence, the claim follows.
\end{proof}

\vspace{10pt}

With these preparations, we now study the following special case of \eqref{conxeq}.

\begin{proposition}\label{lemend}
For $1\le a\le b\le n-1$, the following holds in $H^{*}(\mathcal{Y})$$:$
\begin{align*}
\begin{split}
x_b(x_ax_{a+1}\cdots x_b) 
\ &= \ (-1)^{b-a-1}e_{b-a+1}(x_1,\ldots,x_{b-1})x_{b}\\
&\hspace{40pt}+(-1)^{b-a}e_{b-a+1}(x_1,\ldots,x_{b-1})x_{b+1}\ +\ x_{a,\ldots,b+1}.
\end{split}
\end{align*}
\end{proposition}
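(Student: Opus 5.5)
The plan is to fix $b$ and argue by induction on $k\coloneqq b-a\ge 0$, that is, by decreasing $a$. Throughout, $e_j$ denotes $e_j(x_1,\ldots,x_{b-1})$. At each step we multiply the formula for $a$ by $x_{a-1}$ and simplify, using only Lemma~\ref{s-k+1ls}, Lemma~\ref{lem5} and the recursion \eqref{convention of e 2}.

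\emph{Base case $a=b$.} Lemma~\ref{lem5} with $i=0$ and $t=b$ gives
\begin{align*}
x_b^2=-e_1(x_1,\ldots,x_{b-1})x_b+e_1(x_1,\ldots,x_b)x_{b+1}.
\end{align*}
Substituting $e_1(x_1,\ldots,x_b)=e_1(x_1,\ldots,x_{b-1})+x_b$ turns this into exactly the claimed right hand side $-e_1x_b+e_1x_{b+1}+x_{b,b+1}$.

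\emph{Inductive step.} Assume the formula holds for $a$, with $k=b-a$ and $j=k+1$, and multiply both sides by $x_{a-1}$ (where $a\ge2$).
\begin{itemize}
\item The key observation is that Lemma~\ref{s-k+1ls} applies to $x_{a-1}e_j$ with $m=b-1$ and degree $j=b-a+1$. The lower bound there is $m-j+1=a-1$, so $l=a-1$ is allowed, and we get $x_{a-1}e_j=e_jx_b$.
\item The first term then becomes $e_jx_b^2$. By Lemma~\ref{lem5} with $t=b$ and $i=j$, this equals $-e_{j+1}x_b+e_{j+1}(x_1,\ldots,x_b)x_{b+1}$. Expanding $e_{j+1}(x_1,\ldots,x_b)=e_{j+1}+e_jx_b$ by \eqref{convention of e 2}, it becomes $-e_{j+1}x_b+e_{j+1}x_{b+1}+e_jx_bx_{b+1}$.
\item The second term becomes $e_jx_bx_{b+1}$.
\item The last term becomes $x_{a-1,\ldots,b+1}$.
\end{itemize}

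Collecting the three contributions with their signs $(-1)^{k-1}$, $(-1)^k$ and $+1$, we get
\begin{align*}
(-1)^{k-1}\bigl(-e_{j+1}x_b+e_{j+1}x_{b+1}+e_jx_bx_{b+1}\bigr)+(-1)^ke_jx_bx_{b+1}+x_{a-1,\ldots,b+1}.
\end{align*}
The two $e_jx_bx_{b+1}$ terms cancel, leaving
\begin{align*}
(-1)^{k}e_{j+1}x_b+(-1)^{k+1}e_{j+1}x_{b+1}+x_{a-1,\ldots,b+1}.
\end{align*}
This is precisely the statement for $a-1$, since then $b-(a-1)=k+1$ and the degree of the elementary symmetric polynomial is $j+1=b-(a-1)+1$. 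This completes the induction.

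The main point to get right is the index check that $l=a-1$ lies in the range $m-j+1\le l\le m$ required by Lemma~\ref{s-k+1ls}; it holds with equality at the lower end. The remaining care is with the boundary conventions \eqref{convention of 1.5}, for instance when $b=1$, where $e_j(x_1,\ldots,x_0)$ vanishes and both sides reduce consistently.
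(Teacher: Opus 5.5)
Your proposal is correct and follows the paper's proof essentially step for step: induction on the length with $b$ fixed, absorbing the new leftmost factor via Lemma~\ref{s-k+1ls} at the lower end of its range, resolving $e_jx_b^2$ with Lemma~\ref{lem5}, expanding by \eqref{convention of e 2}, and cancelling the two $e_jx_bx_{b+1}$ terms. The only cosmetic difference is the base case, which you obtain from Lemma~\ref{lem5} with $i=0$; the paper instead expands the defining relation $(x_1+\cdots+x_b)(x_b-x_{b+1})=0$ directly, and this is the same identity.
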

\begin{proof}
We prove the claim by induction on the length $l\coloneqq b-a+1(\ge1)$ of the monomial $x_a\cdots x_b$ in the left hand side.
For the case $l=1$, recall that we have 
\begin{align*}
(x_1+\cdots+x_a)(x_a-x_{a+1})=0
\end{align*}
for $1\le a\le n-1$ from Corollary~\ref{petx}.
Expanding the product in the left hand side, we obtain
\begin{align}\label{m+1 a2}
\begin{split}
x_a^2 \ &= \ -(x_1+\cdots+x_{a-1})x_a+(x_1+\cdots+x_{a})x_{a+1}\\
\ &= \ -e_1(x_1,\ldots,x_{a-1})x_a+e_1(x_1,\ldots,x_{a-1})x_{a+1}+x_ax_{a+1}.
\end{split}
\end{align}
This gives the claim for $l=1$.

We next assume that the claim holds for some length $1\le l<n-1$, and we prove the claim for the length $l+1$.
For that purpose, take $1\le a\le b\le n-1$ which satisfy $b-a+1=l+1$.
Then we have $b-(a+1)+1=l$ so that the inductive hypothesis implies that 
\begin{align*}
\begin{split}
x_b(x_{a+1}\cdots x_b) 
\ &= \ (-1)^{b-a}e_{b-a}(x_1,\ldots,x_{b-1})x_{b}\\
&\hspace{40pt}+(-1)^{b-a-1}e_{b-a}(x_1,\ldots,x_{b-1})x_{b+1}\ +\ x_{a+1,\ldots,b+1}.
\end{split}
\end{align*}
Multiplying $x_a$ on both sides, we obtain
\begin{equation}\label{m+1}
\begin{split}
x_{b}(x_a \cdots x_{b}) \ &= \ (-1)^{b-a}x_ae_{b-a}(x_1,\ldots,x_{b-1})x_{b}\\
&\hspace{40pt}+(-1)^{b-a-1}x_ae_{b-a}(x_1,\ldots,x_{b-1})x_{b+1}\ +\ x_{a,\ldots,b+1} .
\end{split}
\end{equation}
Since the third summand in the right hand side is already square free, it is enough to resolve the squares in the first and the second summands.
Notice that both of these summands have the common part $x_ae_{b-a}(x_1,\ldots,x_{b-1})$ with $b-a\ge1$ and $b-1(\ge a)\ge1$. For this part, we have
\begin{align*}
x_ae_{b-a}(x_1,\ldots,x_{b-1}) \ &= \ e_{b-a}(x_1,\ldots,x_{b-1})x_b
\end{align*}
by Lemma~\ref{s-k+1ls}.
Applying this to the first and the second summands in $(\ref{m+1})$, we obtain
\begin{align}\label{m+12}
\begin{split}
x_b(x_a\cdots x_{b}) \ &= \ (-1)^{b-a}e_{b-a}(x_1,\ldots,x_{b-1})x_b^2 \\
&\hspace{40pt}+(-1)^{b-a-1}e_{b-a}(x_1,\ldots,x_{b-1})x_bx_{b+1}\ +\ x_{a,\ldots,b+1}.
\end{split}
\end{align}
In this expression, the only term which has square is $e_{b-a}(x_1,\ldots,x_{b-1})x_b^2$ in the first summand. So let us express it by Lemma~\ref{lem5}:
\begin{align*}
e_{b-a}(x_1,\ldots,x_{b-1})x_b^2 \ = \ -e_{b-a+1}(x_1,\ldots,x_{b-1})x_b+e_{b-a+1}(x_1,\ldots,x_b)x_{b+1}.
\end{align*}
Applying this result to $(\ref{m+12})$, we obtain
\begin{align*}
&x_{b}(x_a \cdots x_{b}) \\
\ &\hspace{20pt}= \ (-1)^{b-a}\Big(-e_{b-a+1}(x_1,\ldots,x_{b-1})x_b+e_{b-a+1}(x_1,\ldots,x_b)x_{b+1}\Big)\\
&\hspace{60pt}+(-1)^{b-a-1}e_{b-a}(x_1,\ldots,x_{b-1})x_bx_{b+1}\ +\ x_{a,\ldots,b+1} \\[-13pt]\\
\ &\hspace{20pt}= \ (-1)^{b-a}\Big(-e_{b-a+1}(x_1,\ldots,x_{b-1})x_b\\
&\hspace{90pt}+e_{b-a+1}(x_1,\ldots,x_{b-1})x_{b+1}+e_{b-a}(x_1,\ldots,x_{b-1})x_bx_{b+1}\Big)\\
&\hspace{60pt}+(-1)^{b-a-1}e_{b-a}(x_1,\ldots,x_{b-1})x_bx_{b+1}\ +\ x_{a,\ldots,b+1} \quad\qquad \text{(by \eqref{convention of e 2})}\\[-13pt]\\
\ &\hspace{20pt}= \ (-1)^{b-a-1}e_{b-a+1}(x_1,\ldots,x_{b-1})x_{b}\\
&\hspace{60pt}+(-1)^{b-a}e_{b-a+1}(x_1,\ldots,x_{b-1})x_{b+1}\ +\ x_{a,\ldots,b+1}, 
\end{align*}
where the last equality follows since the third and the fourth summands cancelled out.
This proves the claim for the length $l+1$, and the proof completes by induction.
\end{proof}

\vspace{10pt}

\begin{example}
{\rm
Let $n=7$. For $a=3$ and $b=5$, Proposition~$\mathrm{\ref{lemend}}$ shows that
\begin{align*}
x_5\cdot (x_3x_4x_5) \ = \ -e_3(x_1,\ldots,x_4)x_5+e_3(x_1,\ldots,x_4)x_6+x_{3,4,5,6}.
\end{align*}
For the elementary symmetric polynomial appearing here, we have
\begin{align*}
e_3(x_1,\ldots,x_4) \ = \ x_{1,2,3}+x_{1,2,4}+x_{1,3,4}+x_{2,3,4}.
\end{align*}
So we obtain that
\begin{align*}
x_5\cdot (x_3x_4x_5) \ &= \ -x_{1,2,3,5}-x_{1,2,4,5}-x_{1,3,4,5}-x_{2,3,4,5}\\
&\hspace{30pt}+x_{1,2,3,6}+x_{1,2,4,6}+x_{1,3,4,6}+x_{2,3,4,6}+x_{3,4,5,6}.
\end{align*} 
}
\end{example}

\vspace{20pt}

We now consider the expansion of the product $x_c(x_a\cdots x_b)$ for $a\le c\le b$ by square free monomials (see \eqref{conxeq}). Before giving the detail formula, let us show an example which helps the reader to understand it.
For example, if $n=7$, the formula we will see in the next theorem shows that 
\begin{equation}\label{eg10}
\begin{split}
x_4(x_3x_4x_5) \ &= \ e_{2}(x_1,\ldots,x_3)x_{4,5}+2e_{3}(x_1,\ldots,x_4)x_{5}\\
&\hspace{15pt}-e_{2}(x_1,\ldots,x_3)x_{5,6}-2e_{4}(x_1,\ldots,x_4)x_{6}+x_{3,4,5,6} .
\end{split}
\end{equation}
One can see that the elementary symmetric polynomials appear as pairs and that the coefficients for each of these pairs are the same up to sign.
Also, the last summand $x_{3,4,5,6}$ does appear as in Proposition~\ref{lemend}. We now describe the formal statement.

\begin{theorem}\label{xsqu}
For $1\le a\le c\le b\le n-1$, the following holds in $H^{*}(\mathcal{Y})$$:$
\begin{equation}\label{xsqueq}
\begin{split}
x_c(x_ax_{a+1}\cdots x_b) \ &= \ (-1)^{c-a-1}\sum_{i=c}^{b}\bi{i-a}{c-a}e_{i-a+1}(x_1,\ldots,x_{i-1})x_{i,\ldots,b}\\
&\hspace{19pt}+(-1)^{c-a}\sum_{i=c}^{b}\bi{i-a}{c-a}e_{i-a+1}(x_1,\ldots,x_{i-1})x_{i+1,\ldots,b+1}\ + \ x_{a,\ldots,b+1}.
\end{split}
\end{equation}
\end{theorem}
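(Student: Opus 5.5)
Our plan is induction on $b-c\ge 0$ with $a$ and $c$ fixed; Proposition~\ref{lemend} is the base case $c=b$. When $c=b$, only $i=b$ survives in each sum, with $\bi{b-a}{b-a}=1$. The first sum then gives $(-1)^{b-a-1}e_{b-a+1}(x_1,\ldots,x_{b-1})x_b$ and the second gives $(-1)^{b-a}e_{b-a+1}(x_1,\ldots,x_{b-1})x_{b+1}$. This is exactly Proposition~\ref{lemend}.

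For the inductive step, assume \eqref{xsqueq} holds for $(a,c,b-1)$ with $c\le b-1$, and multiply both sides by $x_b$. The term $x_{a,\ldots,b}\,x_b$ is again a consecutive product with its square at the right end. Proposition~\ref{lemend} expands it as
\[
(-1)^{b-a-1}e_{b-a+1}(x_1,\ldots,x_{b-1})x_b+(-1)^{b-a}e_{b-a+1}(x_1,\ldots,x_{b-1})x_{b+1}+x_{a,\ldots,b+1}.
\]
In the first sum, each summand becomes $e_{i-a+1}(x_1,\ldots,x_{i-1})x_{i,\ldots,b}$ with $i\le b-1$, so it is already of the required shape. In the second sum, each summand becomes $e_{i-a+1}(x_1,\ldots,x_{i-1})x_{i+1,\ldots,b}\,x_b$. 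This is square free except when $i=b-1$, where we get $e_{b-a}(x_1,\ldots,x_{b-2})x_b^2$. We resolve that square with Lemma~\ref{lem5} (taking $t=b$, after splitting $e_{b-a}(x_1,\ldots,x_{b-2})$ via \eqref{convention of e 2} if needed), or alternatively we first move an index onto $x_{b}$ using Lemma~\ref{s-k+1ls}. For $i<b-1$ the product $x_{i+1,\ldots,b-1}x_b$ is just $x_{i+1,\ldots,b}$, with no square.

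The bookkeeping step is to collect terms and match them with \eqref{xsqueq} for $(a,c,b)$. The terms $e_{i-a+1}(x_1,\ldots,x_{i-1})x_{i,\ldots,b}$ for $c\le i\le b-1$ come directly from the old first sum, with unchanged coefficients $\bi{i-a}{c-a}$. The terms $e_{i-a+1}(x_1,\ldots,x_{i-1})x_{i+1,\ldots,b+1}$ are harder, because the old second sum gives $x_{i+1,\ldots,b}$ rather than $x_{i+1,\ldots,b+1}$. So the real target must be reached by rewriting. I expect that $e_{i-a+1}(x_1,\ldots,x_{i-1})x_{i+1,\ldots,b}$ should be traded, using Lemma~\ref{s-k+1ls} and \eqref{convention of e 2}, for a combination of $e$-terms times tails ending at $x_{b+1}$ or at $x_b$. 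The new top coefficient $\bi{b-a}{c-a}$ of the $i=b$ terms must then arise as an accumulated sum. By the hockey-stick identity $\sum_{i=c}^{b-1}\bi{i-a}{c-a}=\bi{b-a}{c-a+1}$, together with Pascal's rule, this is plausible: the $i=b$ coefficient collects the contributions of all lower $i$ together with the $1$ from Proposition~\ref{lemend}. Lemma~\ref{nikou} may be needed to verify the alternating sums of binomial coefficients that appear.

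A cleaner alternative avoids multiplying on the right. Write $x_c=\varpi_c-\varpi_{c-1}$, express $x_a\cdots x_b$ in $\varpi$'s, apply Proposition~\ref{varpisqu} to resolve the product, and convert back using Proposition~\ref{piele}. The rational coefficients $\frac{b-i+1}{b-a+2}$ would then have to recombine into the binomials. The main obstacle in either approach is this coefficient matching: showing that after all the substitutions, the terms ending at $x_{b+1}$ and those ending at $x_b$ carry exactly $\pm\bi{i-a}{c-a}$ with opposite signs. I would first verify the inductive step numerically on the example \eqref{eg10}, i.e.\ go from $x_4(x_3x_4)$ to $x_4(x_3x_4x_5)$, to pin down the correct rewriting before doing the general computation.
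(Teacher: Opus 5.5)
You have the same frame as the paper: multiply the case $(a,c,b-1)$ by $x_b$, expand $x_{a,\ldots,b}x_b$ by Proposition~\ref{lemend}, and carry the old first sum over unchanged. Inducting on $b-c$ with base case $c=b$ is, if anything, cleaner than the paper's induction on $b-a+1$, which tacitly needs Proposition~\ref{lemend} whenever $c=b$.

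The inductive step, however, has a genuine gap, and it starts from a misreading. In the hypothesis for $(a,c,b-1)$, the second sum has tails $x_{i+1,\ldots,(b-1)+1}=x_{i+1,\ldots,b}$, and all of them end at $x_b$. So after multiplying by $x_b$, \emph{every} summand becomes $e_{i-a+1}(x_1,\ldots,x_{i-1})\,x_{i+1,\ldots,b-1}x_b^2$, not only the one with $i=b-1$. For example, for $(a,c,b)=(3,3,5)$ the $i=3$ term is $e_1(x_1,x_2)x_4x_5^2$. Consequently there are no square-free terms $e_{i-a+1}(x_1,\ldots,x_{i-1})x_{i+1,\ldots,b}$ to ``trade'' for tails ending at $x_{b+1}$. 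Your sketch of where those tails and the coefficient $\binom{b-a}{c-a}$ come from (hockey-stick plus Pascal) does not match the actual computation, and you leave the coefficient matching undone.

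The missing idea is to apply Proposition~\ref{lemend} to $x_b(x_{i+1}\cdots x_b)$ for each $c\le i\le b-1$.
\begin{itemize}
\item Its last summand $x_{i+1,\ldots,b+1}$ gives the new second-sum terms $e_{i-a+1}(x_1,\ldots,x_{i-1})x_{i+1,\ldots,b+1}$ directly, with coefficient $(-1)^{c-a}\binom{i-a}{c-a}$ unchanged.
\item Its other two summands produce products $e_{i-a+1}(x_1,\ldots,x_{i-1})\,e_{b-i}(x_1,\ldots,x_{b-1})$. By Proposition~\ref{piele} both factors are consecutive $\varpi$-products, and they concatenate:
\[
\frac{1}{(i-a+1)!}\varpi_{a-1}\cdots\varpi_{i-1}\cdot\frac{1}{(b-i)!}\varpi_{i}\cdots\varpi_{b-1}=\binom{b-a+1}{i-a+1}e_{b-a+1}(x_1,\ldots,x_{b-1}).
\]
\item Everything not already in place therefore collapses onto $e_{b-a+1}(x_1,\ldots,x_{b-1})x_b$ and $e_{b-a+1}(x_1,\ldots,x_{b-1})x_{b+1}$. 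Their coefficients are $(-1)^{b-a}$ and $(-1)^{b-a-1}$ times $\bigl(\sum_{i=c}^{b-1}(-1)^{c-i}\binom{i-a}{c-a}\binom{b-a+1}{i-a+1}-1\bigr)$, where the $-1$ comes from the expansion of $x_{a,\ldots,b}x_b$.
\item Lemma~\ref{nikou}, with $d=b-a+1$ and lower index $c-a$, shows that this bracket equals $-(-1)^{b-c}\binom{b-a}{c-a}$. That gives exactly the $i=b$ terms of \eqref{xsqueq}.
\end{itemize}

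Your alternative route, writing $x_c=\varpi_c-\varpi_{c-1}$ and $x_a\cdots x_b$ in $\varpi$'s, does not get around this. Expanding $\prod_j(\varpi_j-\varpi_{j-1})$ produces squared and non-consecutive $\varpi$-monomials, and Proposition~\ref{varpisqu} does not apply to those directly.
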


\vspace{5pt}

\begin{remark}
{\rm When $b\le n-2$, the class $x_n$ does not appear in the right hand side (e.g.,\ see the explicit form for $x_4(x_3x_4x_5)$ above) so that the right hand side is a linear combination of tautological basis of $H^{*}(\mathcal{Y})$.
When $b=n-1$, it does appear in the right hand side. The expansion in terms of tautological basis in this case will be treated in Theorem~\ref{b=n-1}.
}
\end{remark}

\vspace{5pt}

\begin{proof}[Proof of Theorem~$\ref{xsqu}$]
We prove the claim by induction on the length $l\coloneqq b-a+1(\ge1)$ of $x_a\cdots x_b$.
When $l=1$, we have $a=b=c$, and hence the claim follows from the case of $b=a$ of Proposition~$\ref{lemend}$ (or simply from \eqref{m+1 a2}).

We next assume that the claim holds for some length $1\le l<n-1$, and we prove the claim for the length $l+1$.
For that purpose, take $1\le a\le c\le b\le n-1$ which satisfies $b-a+1=l+1$.
Since we have $(b-1)-a+1=l$, the inductive hypothesis implies that
\begin{align*}
x_c(x_a\cdots x_{b-1}) \ &= \ (-1)^{c-a-1}\sum_{i=c}^{b-1}\bi{i-a}{c-a}e_{i-a+1}(x_1,\ldots,x_{i-1})x_{i,\ldots,b-1}\\
&\hspace{50pt}+(-1)^{c-a}\sum_{i=c}^{b-1}\bi{i-a}{c-a}e_{i-a+1}(x_1,\ldots,x_{i-1})x_{i+1,\ldots,b}\ +\ x_{a,\ldots,b} .
\end{align*}
Multiplying $x_{b}$ on both sides of this equality, we obtain 
\begin{align}\label{cm+1}
\begin{split}
x_c(x_a\cdots x_{b}) \ &= \ (-1)^{c-a-1}\sum_{i=c}^{b-1}\bi{i-a}{c-a}e_{i-a+1}(x_1,\ldots,x_{i-1})x_{i,\ldots,b}\\
&\hspace{30pt}+(-1)^{c-a}\sum_{i=c}^{b-1}\bi{i-a}{c-a}e_{i-a+1}(x_1,\ldots,x_{i-1})x_{i+1,\ldots,b-1}x_{b}^2\\
&\hspace{30pt}+x_{a,\ldots,b-1}x_{b}^2.
\end{split}
\end{align}
Noticing that the first summand is already square free, it suffices to resolve the squares in the second and the third summands. 
The former can be computed by Proposition~\ref{lemend} as 
\begin{align*}
&e_{i-a+1}(x_1,\ldots,x_{i-1})x_{i+1,\ldots,b-1}x_{b}^2\\
\ &\hspace{20pt}= \  e_{i-a+1}(x_1,\ldots,x_{i-1})\Big((-1)^{b-i-2}e_{b-i}(x_1,\ldots,x_{b-1})x_{b}\\
&\hspace{170pt}+(-1)^{b-i-1}e_{b-i}(x_1,\ldots,x_{b-1})x_{b+1}\ +\ x_{i+1,\ldots,b+1}\Big).
\end{align*}
Applying Proposition~\ref{piele} to the three elementary symmetric polynomials in the right hand side, this computation can be proceeded as
\begin{align*}
&e_{i-a+1}(x_1,\ldots,x_{i-1})x_{i+1,\ldots,{b-1}}x_{b}^2\\
\ &\hspace{15pt}= \  \frac{1}{(i-a+1)!}\varpi_{a-1}\cdots \varpi_{i-1}\Big{(}(-1)^{b-i-2}\frac{1}{(b-i)!}\varpi_i\cdots \varpi_{b-1}x_{b}\\
&\hspace{165pt}+(-1)^{b-i-1}\frac{1}{(b-i)!}\varpi_i\cdots \varpi_{b-1}x_{b+1}\ +\ x_{i+1,\ldots,b+1}\Big{)} \\
\ &\hspace{15pt}= \  (-1)^{b-i-2}\frac{(b-a+1)!}{(i-a+1)!(b-i)!}\Big( \frac{1}{(b-a+1)!}\,\varpi_{a-1}\cdots \varpi_{b-1} \Big) x_{b} \\
&\hspace{60pt}+(-1)^{b-i-1}\frac{(b-a+1)!}{(i-a+1)!(b-i)!} \Big( \frac{1}{(b-a+1)!}\,\varpi_{a-1}\cdots \varpi_{b-1} \Big) x_{b+1} \\
&\hspace{60pt}+\Big( \frac{1}{(i-a+1)!}\varpi_{a-1}\cdots \varpi_{i-1} \Big) x_{i+1,\ldots,b+1}
\end{align*}
\begin{align*}
&\hspace{15pt}= \  (-1)^{b-i-2}\bi{b-a+1}{i-a+1}e_{b-a+1}(x_1,\ldots,x_{b-1})x_{b}\\
&\hspace{60pt}+(-1)^{b-i-1}\bi{b-a+1}{i-a+1}e_{b-a+1}(x_1,\ldots,x_{b-1})x_{b+1}\\[-13pt]\\
&\hspace{60pt}+e_{i-a+1}(x_1,\ldots,x_{i-1})x_{i+1,\ldots,b+1} \hspace{30pt}\text{(by Proposition~\ref{piele} again)}.
\end{align*}
For the third summand in \eqref{cm+1}, we have
\begin{equation*}
\begin{split}
x_{a,\ldots,b-1}x_{b}^2 \ &= \ (-1)^{b-a-1}e_{b-a+1}(x_1,\ldots,x_{b-1})x_{b}\\
&\hspace{40pt}+(-1)^{b-a}e_{b-a+1}(x_1,\ldots,x_{b-1})x_{b+1}\ +\ x_{a,\ldots,b+1}
\end{split}
\end{equation*}
by Proposition~\ref{lemend}.
Applying these two equalities to the second and the third summands in $(\ref{cm+1})$, it follows that 
\begin{align*}
&x_c(x_a\cdots x_{b}) \\
&\hspace{20pt}= \ (-1)^{c-a-1}\sum_{i=c}^{b-1}\bi{i-a}{c-a}e_{i-a+1}(x_1,\ldots,x_{i-1})x_{i,\ldots,b}\nonumber\\
&\hspace{60pt}+(-1)^{c-a}\sum_{i=c}^{b-1}\bi{i-a}{c-a}\Big\{(-1)^{b-i-2}\bi{b-a+1}{i-a+1}e_{b-a+1}(x_1,\ldots,x_{b-1})x_{b}\nonumber\\
&\hspace{175pt}+(-1)^{b-i-1}\bi{b-a+1}{i-a+1}e_{b-a+1}(x_1,\ldots,x_{b-1})x_{b+1}\nonumber\\
&\hspace{175pt}+e_{i-a+1}(x_1,\ldots,x_{i-1})x_{i+1,\ldots,b+1}\Big\}\nonumber\\
&\hspace{60pt}+(-1)^{b-a-1}e_{b-a+1}(x_1,\ldots,x_{b-1})x_{b}\nonumber\\[-13pt]\\
&\hspace{60pt}+(-1)^{b-a}e_{b-a+1}(x_1,\ldots,x_{b-1})x_{b+1}\ +\ x_{a,\ldots,b+1} .
\end{align*}
By combining terms proportional to each elementary symmetric function, we obtain
\begin{align*}
&x_c(x_a\cdots x_{b}) \\
&\qquad= \ (-1)^{c-a-1}\sum_{i=c}^{b-1}\bi{i-a}{c-a}e_{i-a+1}(x_1,\ldots,x_{i-1})x_{i,\ldots,b}\\
&\hspace{60pt}+(-1)^{c-a}\sum_{i=c}^{b-1}\bi{i-a}{c-a}e_{i-a+1}(x_1,\ldots,x_{i-1})x_{i+1,\ldots,b+1}\\
&\hspace{60pt}+(-1)^{b-a-2}\Big(\sum_{i=c}^{b-1}(-1)^{c-i}\bi{i-a}{c-a}\bi{b-a+1}{i-a+1}-1\Big)e_{b-a+1}(x_1,\ldots,x_{b-1})x_{b}\\
&\hspace{60pt}+(-1)^{b-a-1}\Big(\sum_{i=c}^{b-1}(-1)^{c-i}\bi{i-a}{c-a}\bi{b-a+1}{i-a+1}-1\Big)e_{b-a+1}(x_1,\ldots,x_{b-1})x_{b+1}\\
&\hspace{60pt}+x_{a,\ldots,b+1} .
\end{align*}
Now, by Lemma~\ref{nikou}, the third and the fourth summands are both equal to $-(-1)^{c-b}\bi{b-a}{c-a}\bi{b-a+1}{b-a+1}$ which is in fact $(-1)^{c-b-1}\bi{b-a}{c-a}$. Combining these terms to the first and the second summands as their $b$-th terms, we obtain
\begin{align*}
&\hspace{-20pt}x_c(x_a\cdots x_{b}) \\
&= \ (-1)^{c-a-1}\sum_{i=c}^{b}\bi{i-a}{c-a}e_{i-a+1}(x_1,\ldots,x_{i-1})x_{i,\ldots,b}\\
&\hspace{40pt}+(-1)^{c-a}\sum_{i=c}^{b}\bi{i-a}{c-a}e_{i-a+1}(x_1,\ldots,x_{i-1})x_{i+1,\ldots,b+1} \ + \ x_{a,\ldots,b+1} .
\end{align*}
Thus, the claim holds for the length $l+1$, and the proof completes by induction.
\end{proof}

\vspace{10pt}

\begin{example}\label{exxsq}
{\rm
For $n=7$ and $a=3,c=4,b=5$, we have
\begin{align*}
x_4(x_3x_4x_5) \ &= \ \sum_{i=4}^5\bi{i-3}{1}e_{i-2}(x_1,\ldots,x_{i-1})x_{i,\ldots,5}\\
&\hspace{40pt}-\sum_{i=4}^5\bi{i-3}{1}e_{i-2}(x_1,\ldots,x_{i-1})x_{i+1,\ldots,5}\ +\ x_{3,4,5,6}\\
\ &= \ e_{2}(x_1,\ldots,x_3)x_{4,5}+2e_{3}(x_1,\ldots,x_4)x_{5}\\
&\hspace{14pt}-e_{2}(x_1,\ldots,x_3)x_{5,6}-2e_{4}(x_1,\ldots,x_4)x_{6}+x_{3,4,5,6} .
\end{align*}
This recovers the example which we gave in \eqref{eg10}.
By writing elementary symmetric polynomials explicitly, we obtain
\begin{align*}
x_4(x_3x_4x_5) \ &= \ 2x_{1,2,3,5}+3x_{1,2,4,5}+3x_{1,3,4,5}+3x_{2,3,4,5}\\
&\hspace{14pt}-2x_{1,2,3,6}-2x_{1,2,4,6}-x_{1,2,5,6}-2x_{1,3,4,6}-x_{1,3,5,6}-2x_{2,3,4,6}-x_{2,3,5,6}+x_{3,4,5,6}.
\end{align*}
}
\end{example}

\vspace{10pt}

\begin{remark}\label{remarkcoe}
{\rm As one can see from the computation in Example~\ref{exxsq}, the coefficients of square free monomials of the first summand in \eqref{xsqueq} are not obtained directly from \eqref{xsqueq}. The reason is that the last index $i-1$ in $e_{i-a+1}(x_1,\ldots,x_{i-1})$ and the first index $i$ in $x_{i,\ldots,b}$ are consecutive in \eqref{xsqueq}. This causes that summands having different $i$ may give the same monomial: this was what we saw in Example~\ref{exxsq}.
We will describe the formula that directly gives the coefficient of square free monomials in Section~\ref{opque} (see Proposition~\ref{cor5.2}) with open questions.}
\end{remark}

\vspace{10pt}

As mentioned in the beginning of this section, $x_n$ does not appear in the tautological basis of $H^{*}(\mathcal{Y})$. 
But it does appear in the expansion formula given in Theorem~$\ref{xsqu}$ when $b=n-1$. To express the expansion by the tautological basis, we must exclude $x_n$ by using the relation $x_n=-(x_1+\cdots +x_{n-1})$. We address that this process is not so obvious since it will create squares on the remaining classes $x_1,\ldots,x_{n-1}$. After resolving these new squares, we can obtain the expansion by the tautological basis.
Before stating the formula, let us prepare the next lemma which will be a key step to remove $x_n$.

\begin{lemma}\label{b=n-1 lem}
For $1\le i\le n$, the following holds in $H^{*}(\mathcal{Y})$$:$
\begin{align*}
x_ix_{i+1}\cdots x_n \ = \ (-1)^{n-i+1}\frac{1}{(n-i+1)!}\varpi_{i-1}\varpi_{i}\cdots\varpi_{n-1} .
\end{align*}
\end{lemma}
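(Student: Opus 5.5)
The plan is a descending induction on $i$, from $i=n$ down to $i=1$. Set $m\coloneqq n-i+1$, the number of factors on the left-hand side. Only two facts will be used: $x_j=\varpi_j-\varpi_{j-1}$ from \eqref{xvpiey}, and Proposition~\ref{varpisqu} together with Remark~\ref{remarkpi}, with the conventions $\varpi_0=\varpi_n=0$. Note that $\varpi_n=x_1+\cdots+x_n=0$ genuinely holds in $H^*(\mathcal{Y})$ by Corollary~\ref{petx}.

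\emph{Base case $i=n$.} Here $x_n=\varpi_n-\varpi_{n-1}=-\varpi_{n-1}$. This is the claim with $m=1$.

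\emph{Inductive step.} Assume the claim for some $i$ with $2\le i\le n$, and multiply both sides by $x_{i-1}=\varpi_{i-1}-\varpi_{i-2}$. This gives
\begin{align*}
x_{i-1}x_i\cdots x_n \ = \ \frac{(-1)^{m}}{m!}\Big(\varpi_{i-1}(\varpi_{i-1}\cdots\varpi_{n-1})-\varpi_{i-2}(\varpi_{i-1}\cdots\varpi_{n-1})\Big).
\end{align*}
We then evaluate each product with Proposition~\ref{varpisqu} for $a=i-1\ (\ge1)$ and $b=n-1$, so that $b-a+2=m+1$.

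For the first product, take the multiplier index equal to $a$. The proposition gives
\begin{align*}
\frac{m}{m+1}\,\varpi_{i-2}\cdots\varpi_{n-1} \ + \ \frac{1}{m+1}\,\varpi_{i-1}\cdots\varpi_n ,
\end{align*}
and the second term vanishes because $\varpi_n=0$.

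For the second product, take the multiplier index equal to $a-1$. By Remark~\ref{remarkpi} this equals $\varpi_{i-2}\varpi_{i-1}\cdots\varpi_{n-1}$ with coefficient $1$. This remains valid, with both sides zero, when $i-2=0$.

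Subtracting, the bracket becomes $\big(\frac{m}{m+1}-1\big)\varpi_{i-2}\cdots\varpi_{n-1}=-\frac{1}{m+1}\varpi_{i-2}\cdots\varpi_{n-1}$. Hence
\begin{align*}
x_{i-1}x_i\cdots x_n \ = \ \frac{(-1)^{m+1}}{(m+1)!}\,\varpi_{i-2}\varpi_{i-1}\cdots\varpi_{n-1},
\end{align*}
which is exactly the claim for $i-1$.

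The argument is short, so the only delicate point is boundary bookkeeping, not any hard step. We must confirm that Proposition~\ref{varpisqu} is used only in its valid range, $a=i-1\ge1$ and $b=n-1$. Multiplying by $\varpi_{i-2}$ is the case $i=a-1$, which Remark~\ref{remarkpi} covers. The case $i=1$, where the right-hand side contains $\varpi_0=0$, is then consistent: $x_1\cdots x_n=0$ because the factor $\varpi_0$ vanishes.
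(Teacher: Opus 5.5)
Your proof is correct and is the same induction on the length $n-i+1$ as in the paper: the base case is $x_n=-\varpi_{n-1}$, and the inductive step multiplies by $x_{i-1}=\varpi_{i-1}-\varpi_{i-2}$ and resolves $\varpi_{i-1}^2\varpi_i\cdots\varpi_{n-1}$ by Proposition~\ref{varpisqu} using $\varpi_n=0$. Your appeal to Remark~\ref{remarkpi} for the second product is harmless but unnecessary, since $\varpi_{i-2}\cdot(\varpi_{i-1}\cdots\varpi_{n-1})$ is already the desired monomial.
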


\begin{proof}
We prove the claim by induction on the length $l\coloneqq n-i+1\,(\ge 1)$ of $x_ix_{i+1}\cdots x_n$. When $l=1$ (i.e.\ when $i=n$),  recall that we have the relation $x_1+\cdots+x_n=0$ given in Corollary~\ref{petx}. This can be written as 
\begin{align*}
x_n \ = \ -\varpi_{n-1}
\end{align*}
by \eqref{xvpiey}. This gives the claim for $l=1$. \\
\indent We next assume that the claim holds for some length $1\le l<n$, and we prove the claim for the length $l+1$. For that purpose, take $1\le i\le n$ such that $n-i+1=l+1$. Then we have $n-(i+1)+1=l$ so that the inductive hypothesis implies that
\begin{align*}
x_{i+1}x_{i+2}\cdots x_n \ = \ (-1)^{n-i}\frac{1}{(n-i)!}\varpi_i\varpi_{i+1}\cdots\varpi_{n-1}.
\end{align*}
Multiplying $x_i$ on both sides, we obtain
\begin{align*}
x_ix_{i+1}\cdots x_n \ = \ (-1)^{n-i}\frac{1}{(n-i)!}x_i\varpi_i\varpi_{i+1}\cdots\varpi_{n-1}.
\end{align*}
Inserting $x_i=\varpi_i-\varpi_{i-1}$ to the right hand side, we obtain
\begin{align*}
&x_ix_{i+1}\cdots x_n\\
\ &\hspace{20pt}= \ (-1)^{n-i}\frac{1}{(n-i)!}(\varpi_i-\varpi_{i-1})\varpi_i\varpi_{i+1}\cdots\varpi_{n-1}\\
\ &\hspace{20pt}= \ (-1)^{n-i}\frac{1}{(n-i)!}(\varpi_i^2\varpi_{i+1}\cdots\varpi_{n-1}-\varpi_{i-1}\varpi_i\cdots\varpi_{n-1})\\
\ &\hspace{20pt}= \ (-1)^{n-i}\frac{1}{(n-i)!}\Big(\frac{n-i}{n-i+1}\varpi_{i-1}\varpi_i\cdots\varpi_{n-1}+\frac{1}{n-i+1}\varpi_i\cdots\varpi_n\\
&\hspace{120pt}-\frac{n-i+1}{n-i+1}\varpi_{i-1}\varpi_i\cdots\varpi_{n-1}\Big)\hspace{20pt}(\text{by Proposition~\ref{varpisqu}})\\
\ &\hspace{20pt}= \ (-1)^{n-i+1}\frac{1}{(n-i+1)!}\varpi_{i-1}\cdots\varpi_{n-1},
\end{align*}
where we used $\varpi_n=0$ for the last equality. Hence, the desired equality holds for $l+1$, and the proof completes by induction.
\end{proof}

\vspace{10pt}

We now state the claim which treats the case $b=n-1$ of Theorem~\ref{xsqu} in terms of tautological basis (i.e.\ square free monomials in $x_1,\ldots,x_{n-1}$).

\begin{theorem}\label{b=n-1}
For $1\le a\le c\le b=n-1$, the following holds in $H^{*}(\mathcal{Y})$$:$
\begin{align*}
x_c(x_a\cdots x_{n-1}) \ = \ (-1)^{c-a-1}\sum_{i=c}^{n}\bi{i-a}{c-a}e_{i-a+1}(x_1,\ldots,x_{i-1})x_{i,\ldots,n-1}, 
\end{align*}
where we mean $x_{i,\ldots,n-1}=1$ when $i=n$.
\end{theorem}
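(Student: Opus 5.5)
The plan is to start from Theorem~\ref{xsqu} with $b=n-1$ and eliminate $x_n$ from the last two pieces of \eqref{xsqueq}. With $b=n-1$ the first sum of \eqref{xsqueq} is already the $i=c,\ldots,n-1$ part of the claimed formula. It therefore suffices to show
\begin{align*}
(-1)^{c-a}\sum_{i=c}^{n-1}\bi{i-a}{c-a}e_{i-a+1}(x_1,\ldots,x_{i-1})x_{i+1,\ldots,n}\ +\ x_{a,\ldots,n}
\ = \ (-1)^{c-a-1}\bi{n-a}{c-a}e_{n-a+1}(x_1,\ldots,x_{n-1}).
\end{align*}
The right-hand side here is exactly the missing $i=n$ term of the theorem.

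The key step is to rewrite every summand on the left as a multiple of the single class $\varpi_{a-1}\varpi_a\cdots\varpi_{n-1}$. By Lemma~\ref{b=n-1 lem}, applied with starting index $i+1$,
\begin{align*}
x_{i+1,\ldots,n}=\frac{(-1)^{n-i}}{(n-i)!}\varpi_i\cdots\varpi_{n-1}.
\end{align*}
By Proposition~\ref{piele}, $e_{i-a+1}(x_1,\ldots,x_{i-1})=\frac{1}{(i-a+1)!}\varpi_{a-1}\cdots\varpi_{i-1}$. Multiplying these gives
\begin{align*}
e_{i-a+1}(x_1,\ldots,x_{i-1})\,x_{i+1,\ldots,n}=(-1)^{n-i}\bi{n-a+1}{i-a+1}e_{n-a+1}(x_1,\ldots,x_{n-1}),
\end{align*}
using Proposition~\ref{piele} once more. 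The degenerate case $a=1$ needs a remark: there $\varpi_0=0$ and, by the convention \eqref{convention of 1.5}, both sides vanish, since $e_{i}(x_1,\ldots,x_{i-1})=0$ and $e_n(x_1,\ldots,x_{n-1})=0$. In the same way, Lemma~\ref{b=n-1 lem} with $i=a$ gives $x_{a,\ldots,n}=(-1)^{n-a+1}e_{n-a+1}(x_1,\ldots,x_{n-1})$.

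It remains to verify a binomial identity, which I expect to be the only real obstacle, mainly a matter of bookkeeping signs and index ranges. The total coefficient of $e_{n-a+1}(x_1,\ldots,x_{n-1})$ on the left is
\begin{align*}
(-1)^{c-a}(-1)^{n-c}\sum_{i=c}^{n-1}(-1)^{i-c}\bi{i-a}{c-a}\bi{n-a+1}{i-a+1}+(-1)^{n-a+1}.
\end{align*}
Substitute $j=i-a$, and apply Lemma~\ref{nikou} with $d=n-a+1$ and lower index $c-a$; this is valid since $d>c-a\ge 0$. The full sum over $j=c-a,\ldots,n-a$ equals $1$. Our sum omits only the last term $j=n-a$, which equals $(-1)^{n-c}\bi{n-a}{c-a}$. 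Hence our partial sum equals $1-(-1)^{n-c}\bi{n-a}{c-a}$. The total coefficient then becomes
\begin{align*}
(-1)^{n-a}-(-1)^{c-a}\bi{n-a}{c-a}+(-1)^{n-a+1}=(-1)^{c-a-1}\bi{n-a}{c-a},
\end{align*}
which is exactly the required coefficient. This completes the plan.
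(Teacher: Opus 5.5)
Your proposal is correct and follows the paper's proof essentially step by step. Like the paper, you apply Theorem~\ref{xsqu} with $b=n-1$, convert the second and third summands into multiples of $e_{n-a+1}(x_1,\ldots,x_{n-1})$ via Lemma~\ref{b=n-1 lem} and Proposition~\ref{piele}, and close with Lemma~\ref{nikou}; your explicit handling of the degenerate case $a=1$ (where $\varpi_0=0$ puts Proposition~\ref{piele} formally outside its stated range) is a small point the paper passes over silently.
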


\begin{proof}
By Theorem~\ref{xsqu}, we have
\begin{equation}\label{b=n-13}
\begin{split}
x_c(x_a\cdots x_{n-1}) \ &= \ (-1)^{c-a-1}\sum_{i=c}^{n-1}\bi{i-a}{c-a}e_{i-a+1}(x_1,\ldots,x_{i-1})x_{i,\ldots,n-1}\\
&\hspace{40pt}+(-1)^{c-a}\sum_{i=c}^{n-1}\bi{i-a}{c-a}e_{i-a+1}(x_1,\ldots,x_{i-1})x_{i+1,\ldots,n}\ + \ x_{a,\ldots,n}.
\end{split}
\end{equation}
Let us rewrite the second and the third summands in terms of $x_1,\ldots,x_{n-1}$. Applying Proposition~\ref{piele} and Lemma~\ref{b=n-1 lem} to the second summand, it can be computed as 
\begin{equation}\label{b=n-14}
\begin{split}
&e_{i-a+1}(x_1,\ldots,x_{i-1})x_{i+1,\ldots,n}\\ 
\ &\hspace{20pt}= \ \frac{1}{(i-a+1)!}\varpi_{a-1}\cdots\varpi_{i-1}(-1)^{n-i}\frac{1}{(n-i)!}\varpi_i\cdots\varpi_{n-1}\\
\ &\hspace{20pt}= \ (-1)^{n-i}\frac{(n-a+1)!}{(i-a+1)!(n-i)!}\frac{1}{(n-a+1)!}\varpi_{a-1}\cdots\varpi_{n-1}\\
\ &\hspace{20pt}= \ (-1)^{n-i}\bi{n-a+1}{i-a+1}e_{n-a+1}(x_1,\ldots,x_{n-1}),
\end{split}
\end{equation}
where the last equality follows by Proposition~\ref{piele}. For the third summand  in \eqref{b=n-13}, we have
\begin{equation}\label{b=n-15}
\begin{split}
x_{a,\ldots,n} \ &= \ (-1)^{n-a+1}\frac{1}{(n-a+1)!}\varpi_{a-1}\cdots\varpi_{n-1}\hspace{20pt}(\text{by Lemma~\ref{b=n-1 lem}})\\
\ &= \ (-1)^{n-a+1}e_{n-a+1}(x_1,\ldots,x_{n-1}),
\end{split}
\end{equation}
where the last equality follows by Proposition~\ref{piele}. Applying \eqref{b=n-14} and \eqref{b=n-15} to the second and the third summands in \eqref{b=n-13}, we obtain
\begin{align}\label{b=n-13.5}
x_c(x_a\cdots &x_{n-1})\nonumber\\
\ &= \ (-1)^{c-a-1}\sum_{i=c}^{n-1}\bi{i-a}{c-a}e_{i-a+1}(x_1,\ldots,x_{i-1})x_{i,\ldots,n-1}\nonumber\\
&\hspace{50pt}+(-1)^{c-a}\sum_{i=c}^{n-1}(-1)^{n-i}\bi{i-a}{c-a}\bi{n-a+1}{i-a+1}e_{n-a+1}(x_1,\ldots,x_{n-1})\nonumber\\
&\hspace{50pt}+(-1)^{n-a+1}e_{n-a+1}(x_1,\ldots,x_{n-1})\nonumber\\
\ &= \ (-1)^{c-a-1}\sum_{i=c}^{n-1}\bi{i-a}{c-a}e_{i-a+1}(x_1,\ldots,x_{i-1})x_{i,\ldots,n-1}\nonumber\\
&\hspace{50pt}+(-1)^{n-a}\Bigg(\sum_{i=c}^{n-1}(-1)^{c-i}\bi{i-a}{c-a}\bi{n-a+1}{i-a+1}-1\Bigg)e_{n-a+1}(x_1,\ldots,x_{n-1}).
\end{align}
Now, by Lemma~\ref{nikou}, the coefficient of the second summand in \eqref{b=n-13.5} is equal to $-(-1)^{c-n}\bi{n-a}{c-a}\bi{n-a+1}{n-a+1}$ which is in fact $(-1)^{c-n-1}\bi{n-a}{c-a}$. Combining this term to the first summand in \eqref{b=n-13.5} as its $n$-th term, we obtain
\begin{align*}
x_c(x_a\cdots x_{n-1}) \ = \ (-1)^{c-a-1}\sum_{i=c}^{n}\bi{i-a}{c-a}e_{i-a+1}(x_1,\ldots,x_{i-1})x_{i,\ldots,n-1},
\end{align*}
where we mean $x_{i,\ldots,n-1}=1$ when $i=n$.
\end{proof}

\vspace{10pt}

\begin{example}\label{nsqu}
{\rm
For $n=6$ and $a=3,c=4,b=5$, we have
\begin{align*}
x_4(x_3x_4x_5) \ &= \ \sum_{i=4}^{6}\bi{i-3}{1}e_{i-2}(x_1,\ldots,x_{i-1})x_{i,\ldots,5} \\
\ &= \ e_{2}(x_1,\ldots,x_3)x_{4,5}+2e_{3}(x_1,\ldots,x_4)x_{5}+3e_{4}(x_1,\ldots,x_5)\\[-13pt]\\
\ &= \ 3x_{1,2,3,4}+5x_{1,2,3,5}+6x_{1,2,4,5}+6x_{1,3,4,5}+6x_{2,3,4,5}.
\end{align*}
Having $n=6$ in mind (so that $x_6=-(x_1+\cdots+x_5)$), one may compare this with Example~\ref{exxsq}.
}
\end{example}

\vspace{20pt}

\section{General case}

In this section, we consider expansion formula for possibly non-consecutive monomials which have a single square.
The beginning case is to expand
\begin{align*}
x_lx_a\cdots x_{c-1}x_c^2x_{c+1}\cdots x_b
\end{align*}
for $1\le l<a-1$ and $a\le c\le b\le n-1$.
By applying Theorem~$\ref{xsqu}$ to the part other than $x_l$, we have
\begin{align*}
&x_lx_a\cdots x_{c-1}x_c^2x_{c+1}\cdots x_b \ = \  (-1)^{c-a-1}\sum_{i=c}^{b}\bi{i-a}{c-a}x_le_{i-a+1}(x_1,\ldots,x_{i-1})x_{i,\ldots,b} \\
&\hspace{155pt}+(-1)^{c-a}\sum_{i=c}^{b}\bi{i-a}{c-a}x_le_{i-a+1}(x_1,\ldots,x_{i-1})x_{i+1,\ldots,b+1}\\
&\hspace{155pt}+ \ x_lx_{a,\ldots,b+1} .
\end{align*}
Since the third summand is square free, we focus on the common part
\begin{align}\label{xlk1s0}
x_le_{i-a+1}(x_1,\ldots,x_{i-1})
\end{align}
in the first and the second summand.
As we will see below, this can be expressed as a linear combination of square free monomials in $x_1,\ldots,x_{i-1}$ so that it will not create squares after multiplying the residual terms $x_{i,\ldots,b}$ or $x_{i+1,\ldots,b+1}$.
Noticing that $1\le l<(i-1)-(i-a+1)$ for the indexes in \eqref{xlk1s0}, it suffices to consider the square free expansion of
\begin{align}\label{xlk1s}
x_le_k(x_1,\ldots,x_m) \qquad (1\le l<m-k+1)
\end{align}
for $k\ge0$ and $1\le m\le n-1$.

\vspace{10pt}

The next claim provides the expansion formula for \eqref{xlk1s} by square free monomials in $x_1,\ldots,x_s$.
\begin{proposition}\label{lkas}
Let $k\ge0$ and $1\le m\le n-1$. If $1\le l<m-k+1$, then the following holds in $H^{*}(\mathcal{Y})$$:$ 
\begin{align*}
x_le_k(x_1,\ldots,x_m) \ = \ \sum_{i=0}^ke_i(x_1,\ldots,x_{l+i-1})x_{l+i}e_{k-i}(x_{l+i+1},\ldots,x_m),
\end{align*}
where we are adapting the conventions \eqref{convention of e} and \eqref{convention of 1.5} for elementary symmetric polynomials.
\end{proposition}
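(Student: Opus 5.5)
We prove the identity by induction on $m$, for all admissible $k$ and $l$ simultaneously. The hypothesis $1\le l<m-k+1$ is equivalent to $l\le m-k$.

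\textbf{Case $k=0$.} The statement reads $x_l=e_0(x_1,\ldots,x_{l-1})\,x_l\,e_0(x_{l+1},\ldots,x_m)$. This holds by the conventions \eqref{convention of e} and \eqref{convention of 1.5}, including when $l=1$ and the first set of variables is empty.

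\textbf{Case $k\ge1$.} We split off the last variable using \eqref{convention of e 2} with $t=m$:
\begin{align*}
x_le_k(x_1,\ldots,x_m)\ =\ x_le_k(x_1,\ldots,x_{m-1})\ +\ x_le_{k-1}(x_1,\ldots,x_{m-1})\,x_m .
\end{align*}
We treat the two terms separately.

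\emph{Second term.} Since $l\le m-k=(m-1)-(k-1)$, the pair $(k-1,m-1)$ is admissible, so the inductive hypothesis gives
\begin{align*}
x_le_{k-1}(x_1,\ldots,x_{m-1})\ =\ \sum_{i=0}^{k-1}e_i(x_1,\ldots,x_{l+i-1})\,x_{l+i}\,e_{k-1-i}(x_{l+i+1},\ldots,x_{m-1}).
\end{align*}
Every index occurring on the right is at most $m-1$, so multiplying by $x_m$ simply appends $x_m$ to each summand. (When $m-1=0$ the admissibility condition forces the degenerate cases, which are covered by the conventions.)

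\emph{First term, generic case $l\le m-1-k$.} Here the pair $(k,m-1)$ is admissible, and induction gives
\begin{align*}
x_le_k(x_1,\ldots,x_{m-1})\ =\ \sum_{i=0}^{k}e_i(x_1,\ldots,x_{l+i-1})\,x_{l+i}\,e_{k-i}(x_{l+i+1},\ldots,x_{m-1}).
\end{align*}

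\emph{First term, boundary case $l=m-k$.} Now $(k,m-1)$ is not admissible. Instead, $l=(m-1)-k+1$, so Lemma~\ref{s-k+1ls} applies and gives
\begin{align*}
x_le_k(x_1,\ldots,x_{m-1})\ =\ e_k(x_1,\ldots,x_{m-1})\,x_m .
\end{align*}
This is exactly the $i=k$ term of the generic expansion, because $l+k=m$ and $e_0(\emptyset)=1$. For every $i<k$ the generic term contains $e_{k-i}(x_{l+i+1},\ldots,x_{m-1})$, a polynomial in $k-i-1$ variables of degree $k-i$, which vanishes. So in both cases the first term equals
\begin{align*}
\sum_{i=0}^{k}e_i(x_1,\ldots,x_{l+i-1})\,x_{l+i}\,e_{k-i}(x_{l+i+1},\ldots,x_{m-1}).
\end{align*}

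\textbf{Combining.} Add the two expansions term by term in $i$. For each $i$, apply the Pascal-type identity
\begin{align*}
e_{k-i}(x_{l+i+1},\ldots,x_{m-1})+e_{k-i-1}(x_{l+i+1},\ldots,x_{m-1})\,x_m\ =\ e_{k-i}(x_{l+i+1},\ldots,x_m),
\end{align*}
which is \eqref{convention of e 2} applied to the variable set $x_{l+i+1},\ldots,x_m$. For $i=k$ the second summand is absent, and no correction is needed since $e_0=1$ on either variable set. This yields the claimed formula.

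The main obstacle is the boundary case $l=m-k$, where the inductive hypothesis is unavailable. It is resolved by Lemma~\ref{s-k+1ls} together with the vanishing of the lower terms. The remaining care is with the empty-set conventions when $l=1$ or when $l+i+1>m$.
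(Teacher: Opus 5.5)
Your proof is correct, and it takes a different route from the paper's.

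\textbf{The paper's route.} The paper keeps $m$ fixed and splits $e_k(x_1,\ldots,x_m)$ according to whether $x_l$ occurs. This creates the square $x_l^2$. Lemma~\ref{lem5} resolves it by pushing it onto $x_{l+1}$, with a telescoping cancellation. The procedure is then iterated (``continuing this process'') through $x_{l+1},\ldots,x_{l+k}$, and the $i$-th summand of the formula is left behind at the $i$-th step.

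\textbf{Your route.} You induct on $m$, peel off $x_m$ by the Pascal rule \eqref{convention of e 2}, and never have to resolve a square by hand. The only place a relation of $H^*(\mathcal{Y})$ enters is a single application of Lemma~\ref{s-k+1ls} in the boundary case $l=m-k$. Everything else is a polynomial identity: Pascal's rule and the vanishing of $e_j$ in fewer than $j$ variables. Both arguments ultimately rest on Lemma~\ref{s-k+1ls}, since Lemma~\ref{lem5} is derived from it.

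\textbf{What each buys.} Your version replaces the paper's informal iteration with a genuinely formal two-case inductive step. It also makes visible exactly where the hypothesis $l\le m-k$ and the geometry are used, namely only at the boundary. The paper's version is more constructive: it shows the mechanism by which the square migrates rightward, in the same spirit as Proposition~\ref{lemend}. This explains why the formula has its shape.

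One small remark: your parenthetical about $m-1=0$ is vacuous, since $k\ge 1$ together with $1\le l\le m-k$ already forces $m\ge 2$.
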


\vspace{5pt}

\begin{remark}
{\rm Recall that we have Lemma~\ref{s-k+1ls} which treats \eqref{xlk1s} for the complemental range of $m$. Combining Lemma~\ref{s-k+1ls} with Proposition~\ref{lkas}, we obtain the expansion formula for \eqref{xlk1s} without the restriction of the range of $m$ (note that the range of $k$ is different)}.
\end{remark}

\vspace{5pt}

\begin{proof}[Proof of Proposition~$\ref{lkas}$]
We decompose $e_k(x_1,\ldots,x_m)$ in the left hand side into the part which excludes $x_l$ and the part which contains $x_l$. Then we have
\begin{align}\label{xlek1s}
\begin{split}
&x_le_k(x_1,\ldots,x_m) \\
&\hspace{20pt}\ = \ x_le_k(x_1,\ldots,\check{x_l},\ldots,x_m) \ + \ x_l^2e_{k-1}(x_1,\ldots,\check{x_l},\ldots,x_m) ,
\end{split}
\end{align}
where the symbol $\check{x_l}$ means that we delete $x_l$ from the list. For the elementary symmetric polynomials appearing here, we have
\begin{align*}
e_k(x_1,\ldots,\check{x_l},\ldots,x_m) \ = \ \sum_{i=0}^ke_i(x_1,\ldots,x_{l-1})e_{k-i}(x_{l+1},\ldots,x_m)
\end{align*}
and a similar formula for $e_{k-1}(x_1,\ldots,\check{x_l},\ldots,x_m)$.
Note that we are adapting the convention \eqref{convention of e} and \eqref{convention of 1.5}.
Thus, the equality $(\ref{xlek1s})$ can be written as
\begin{align}\label{xlekasm}
\begin{split}
x_le_k(x_1,\ldots,x_m) 
&\ = \ \sum_{i=0}^ke_i(x_1,\ldots,x_{l-1})x_le_{k-i}(x_{l+1},\ldots,x_m) \\
&\hspace{50pt}\ + \ \sum_{i=0}^{k-1}e_i(x_1,\ldots,x_{l-1})x_l^2e_{k-1-i}(x_{l+1},\ldots,x_m) .
\end{split}
\end{align}
Applying Lemma~$\ref{lem5}$ to the second summand in the right hand side, this computation can be proceeded as
\begin{align}\label{xlekast}
\begin{split}
x_le_k(x_1,\ldots,x_m) 
&\ = \ \sum_{i=0}^ke_i(x_1,\ldots,x_{l-1})x_le_{k-i}(x_{l+1},\ldots,x_m) \\
&\hspace{50pt}\ -\sum_{i=0}^{k-1}e_{i+1}(x_1,\ldots,x_{l-1})x_le_{k-1-i}(x_{l+1},\ldots,x_m)\\
&\hspace{50pt}\ +\sum_{i=0}^{k-1}e_{i+1}(x_1,\ldots,x_l)x_{l+1}e_{k-1-i}(x_{l+1},\ldots,x_m)\\
&\ = \ e_{0}(x_1,\ldots,x_{l-1}) x_le_{k}(x_{l+1},\ldots,x_m) \\
&\hspace{50pt}\ +\sum_{i=0}^{k-1}e_{i+1}(x_1,\ldots,x_l)x_{l+1}e_{k-1-i}(x_{l+1},\ldots,x_m) ,
\end{split}
\end{align}
where the last equality follows since the terms for $i>0$ in the first summand of the previous equality cancel with the terms in the second summand. Note that we left the term $e_{0}(x_1,\ldots,x_{l-1})$ without replacing it with $1$ to make the following computation clearer.
We now decompose $e_{k-1-i}(x_{l+1},\ldots,x_m)$  in the second summand of the last expression into the part which excludes $x_{l+1}$ and the part which contains $x_{l+1}$ (as we did for $x_{l}$ above). Then we have
\begin{align*}
x_le_k(x_1,\ldots,x_m) 
&\ = \ e_{0}(x_1,\ldots,x_{l-1})x_le_{k}(x_{l+1},\ldots,x_m) \\
&\hspace{30pt}\ +\sum_{i=0}^{k-1}e_{i+1}(x_1,\ldots,x_l)x_{l+1}e_{k-1-i}(x_{l+2},\ldots,x_m)\\
&\hspace{30pt}\ +\sum_{i=0}^{k-2}e_{i+1}(x_1,\ldots,x_l)x_{l+1}^2e_{k-2-i}(x_{l+2},\ldots,x_m),
\end{align*}
where we note that the summation for the second summand runs only for $0\le i\le k-2$ since $e_{k-2-i}(x_{l+2},\ldots,x_m)=0$ when $i=k-1$.
To the second and the third summands, we apply the same computation which we did from \eqref{xlekasm} to \eqref{xlekast}. Then we obtain
\begin{align*}
x_le_k(x_1,\ldots,x_m) 
&\ = \ e_{0}(x_1,\ldots,x_{l-1})x_le_{k}(x_{l+1},\ldots,x_m) \\[-13pt]\\
&\hspace{30pt}\ + e_1(x_1,\ldots,x_l)x_{l+1}e_{k-1}(x_{l+2},\ldots,x_m)\\
&\hspace{30pt}\ +\sum_{i=0}^{k-2}e_{i+2}(x_1,\ldots,x_{l+1})x_{l+2}e_{k-2-i}(x_{l+2},\ldots,x_m) .
\end{align*}
Continuing this process, we obtain
\begin{align*}
x_le_k(x_1,\ldots,x_m)
\ &= \ e_{0}(x_1,\ldots,x_{l-1})x_le_k(x_{l+1},\ldots,x_m)\\[-13pt]\\
&\hspace{30pt}+e_1(x_1,\ldots,x_l)x_{l+1}e_{k-1}(x_{l+2},\ldots,x_m)\\[-13pt]\\
&\hspace{30pt}+e_2(x_1,\ldots,x_{l+1})x_{l+2}e_{k-2}(x_{l+3},\ldots,x_m)\\[-13pt]\\
&\hspace{30pt}+\cdots\\[-13pt]\\
&\hspace{30pt}+e_k(x_1,\ldots,x_{l+k-1})x_{l+k}e_0(x_{l+k+1},\ldots,x_m) \\
\ &= \ \sum_{i=0}^ke_i(x_1,\ldots,x_{l+i-1})x_{l+i}e_{k-i}(x_{l+i+1},\ldots,x_m)
\end{align*}
which is the desired equality.
\end{proof}

\vspace{10pt}

Motivated by this proposition, we set
\begin{align*}
e_k^{(l)}(x_1,\ldots,x_m) 
\ \coloneqq \ \sum_{j=0}^{k}e_j(x_1,\ldots,x_{l+j-1})x_{l+j}e_{k-j}(x_{l+j+1},\ldots,x_m)
\end{align*}
for $k\ge0$ and $1\le l\le m\le n-1$. 
We note that the cohomological degree of $e_k^{(l)}(x_1,\ldots,x_m)$ is $2(k+1)$ since $\deg x_i =2$ for $1\le i\le n-1$.
By construction, the above proposition can be stated as follows.

\begin{corollary}\label{lkascor}
Let $k\ge0$ and $1\le m\le n-1$. If $1\le l<m-k+1$, then the following holds in $H^{*}(\mathcal{Y})$$:$
\begin{align*}
x_le_k(x_1,\ldots,x_m) \ = \ e_{k}^{(l)}(x_1,\ldots,x_m).
\end{align*}
\end{corollary}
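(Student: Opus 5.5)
This corollary is a restatement of Proposition~\ref{lkas} in the notation $e_k^{(l)}$ just introduced, so the plan is to compare the two expressions term by term.

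First, I check that the hypotheses match exactly. Both Corollary~\ref{lkascor} and Proposition~\ref{lkas} assume $k\ge0$, $1\le m\le n-1$ and $1\le l<m-k+1$.

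Second, I check that $e_k^{(l)}(x_1,\ldots,x_m)$ is well defined in this range. Since $k\ge 0$, the inequality $l<m-k+1$ gives $l\le m$. This is the range $1\le l\le m\le n-1$ required in the definition of $e_k^{(l)}$.

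Third, I compare the two sums. Unfolding the definition gives
\begin{align*}
e_k^{(l)}(x_1,\ldots,x_m)=\sum_{j=0}^{k}e_j(x_1,\ldots,x_{l+j-1})\,x_{l+j}\,e_{k-j}(x_{l+j+1},\ldots,x_m).
\end{align*}
After renaming the summation index $j$ to $i$, this is exactly the right hand side of Proposition~\ref{lkas}. The same conventions \eqref{convention of e} and \eqref{convention of 1.5} govern the empty-variable and degree-zero cases in both expressions, so the terms agree.

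Finally, all indices appearing are in range. Every $x_{l+j}$ with $0\le j\le k$ satisfies $l+j\le l+k\le m\le n-1$, so no $x_n$ is involved and the two expressions coincide in $H^*(\mathcal{Y})$. The claim then follows immediately from Proposition~\ref{lkas}. I do not expect any step to be an obstacle; the only care needed is the index bookkeeping in the second and fourth steps.
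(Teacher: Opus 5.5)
Your proposal is correct and is the same argument as the paper's, which simply observes that the right-hand side of Proposition~\ref{lkas} is $e_k^{(l)}(x_1,\ldots,x_m)$ by definition. Your checks that $l\le m$ (so $e_k^{(l)}$ is defined) and that $l+k\le m$ are correct bookkeeping that the paper leaves implicit.
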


\vspace{10pt}

Similarly, for $k\ge0$ and $1\le l_1<l_2\le m\le n-1$, we set
\begin{align*}
&e_k^{(l_1,l_2)}(x_1,\ldots,x_m) \\
&\hspace{20pt}
\ = \ 
\sum_{j_2=0}^k\sum_{j_1=0}^{j_2} 
e_{j_{1}}(x_{1},\ldots,x_{d_{1}-1})x_{d_{1}} 
e_{j_{2}-j_1}(x_{d_{1}+1},\ldots,x_{d_{2}-1})x_{d_{2}} 
e_{r-j_2}(x_{d_{2}+1},\ldots,x_m) ,
\end{align*}
where set $d_1\coloneqq l_1+j_1$ and $d_2\coloneqq l_2+j_2$. Note the sum of the indexes of the elementary symmetric polynomials appearing here satisfy $(j_{1})+(j_{2}-j_1)+(k-j_2)=k$. The cohomological degree of $e_k^{(l_1,l_2)}(x_1,\ldots,x_m)$ is $2(k+2).$ \\

More generally, for $k\ge0$ and $1\le l_1<l_2<\cdots <l_s\le m\le n-1$, we set
\begin{align}\label{defofepls}
e_k^{(l_1,\ldots,l_s)}(x_1,\ldots,x_m) 
\ \coloneqq \ 
\sum_{j} \left( \prod_{t=0}^{s-1} e_{j_{t+1}-j_t}(x_{d_{t}+1},\ldots,x_{d_{t+1}-1})x_{d_{t+1}} \right) e_{k-j_s}(x_{d_{s}+1},\ldots,x_m),
\end{align}
where the summation in the right hand side is 
\begin{align*}
\sum_{j} = 
\sum_{j_s=0}^{k} \sum_{j_{s-1}=0}^{j_s} \sum_{j_{s-2}=0}^{j_{s-1}} \cdots  \sum_{j_1=0}^{j_2} , 
\qquad d_t\coloneqq l_t + j_t, \qquad l_0=j_0=0.
\end{align*}
We note that 
\begin{align*}
\deg e_k^{(l_1,\ldots,l_s)}(x_1,\ldots,x_m) = 2(k+s).
\end{align*}
When $s=0$, this agrees with the standard elementary symmetric polynomial:
\begin{align*}
e_k^{\emptyset}(x_1,\ldots,x_m) 
\ = \ 
e_k(x_1,\ldots,x_m).
\end{align*}

\vspace{10pt}

By applying Proposition~$\ref{lkas}$ repeatedly, we obtain the next claim.

\begin{theorem}\label{thm12}
For $1\le l_1<l_2<\cdots<l_s<a-1$, $a\le c\le b\le n-1$ and $M\subseteq\lbrace b+2,\ldots n-1\rbrace$. Then the following holds in $H^{*}(\mathcal{Y})$:
\begin{align*}
x_{l_1,l_2,\ldots,l_s}&x_a\cdots x_{c-1}x_c^2x_{c+1}\cdots x_bx_M\\ 
\ &\hspace{-20pt}= \ (-1)^{c-a-1}\sum_{i=c}^b\bi{i-a}{c-a}e_{i-a+1}^{(l_1,\ldots,l_s)}(x_1,\ldots,x_{i-1})x_{i,\ldots,b}x_M\\
&\hspace{5pt}
+ (-1)^{c-a}\sum_{i=c}^b\bi{i-a}{c-a}e_{i-a+1}^{(l_1,\ldots,l_s)}(x_1,\ldots,x_{i-1})x_{i+1,\ldots,b+1}x_M \ + \ x_{l_1,l_2,\ldots,l_s}x_{a,\ldots,b+1}x_M,
\end{align*}
where $e_{i-a+1}^{(l_1,\ldots,l_s)}(x_1,\ldots,x_{i-1})$ is the polynomial  defined in \eqref{defofepls} of degree $2(i-a+1+s)$.\\
\end{theorem}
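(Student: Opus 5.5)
The plan is to first apply Theorem~\ref{xsqu} to the consecutive block $x_a\cdots x_{c-1}x_c^2x_{c+1}\cdots x_b$. Multiplying the resulting identity by $x_{l_1}\cdots x_{l_s}x_M$ produces three groups of terms. The third group, $x_{l_1,\ldots,l_s}x_{a,\ldots,b+1}x_M$, is already square free: we have $l_s<a-1$, and $M\subseteq\{b+2,\ldots,n-1\}$ is disjoint from $\{a,\ldots,b+1\}$. The whole task therefore reduces to the identity
\begin{align*}
x_{l_1}\cdots x_{l_s}\,e_{k}(x_1,\ldots,x_{i-1}) \ = \ e_{k}^{(l_1,\ldots,l_s)}(x_1,\ldots,x_{i-1}), \qquad k=i-a+1,
\end{align*}
for each $c\le i\le b$. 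The right hand side involves only $x_1,\ldots,x_{i-1}$ and is a sum of square free monomials. Multiplying it by $x_{i,\ldots,b}x_M$ or by $x_{i+1,\ldots,b+1}x_M$ therefore creates no new squares and reproduces the stated formula term by term. Note that $m-k+1=(i-1)-(i-a+1)+1=a-1>l_s$ with $m=i-1$, so every $l_t$ lies strictly below $m-k+1$.

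To prove the displayed identity, I would induct on $s$, peeling off the largest index $l_s$ first. By Corollary~\ref{lkascor} with $l=l_s<m-k+1$,
\begin{align*}
x_{l_s}e_k(x_1,\ldots,x_m)=\sum_{j_s=0}^k e_{j_s}(x_1,\ldots,x_{d_s-1})\,x_{d_s}\,e_{k-j_s}(x_{d_s+1},\ldots,x_m),\qquad d_s=l_s+j_s .
\end{align*}
Next I multiply by $x_{l_{s-1}}$. Since $l_{s-1}<l_s\le d_s$, the factor $x_{l_{s-1}}$ only interacts with $e_{j_s}(x_1,\ldots,x_{d_s-1})$. So I apply Corollary~\ref{lkascor} again, now with $m'=d_s-1$ and $k'=j_s$. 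The hypothesis needed is $l_{s-1}<m'-k'+1=l_s$, which holds. This gives
\begin{align*}
\sum_{j_{s-1}=0}^{j_s}e_{j_{s-1}}(x_1,\ldots,x_{d_{s-1}-1})\,x_{d_{s-1}}\,e_{j_s-j_{s-1}}(x_{d_{s-1}+1},\ldots,x_{d_s-1}).
\end{align*}
Iterating down to $l_1$ yields exactly the nested sum defining $e_k^{(l_1,\ldots,l_s)}$, with the ranges $0\le j_1\le\cdots\le j_s\le k$. The key point is that at each stage the relevant gap condition is $l_{t}<(d_{t+1}-1)-j_{t+1}+1=l_{t+1}$, which is automatic. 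I also need $m'\ge 1$ wherever Corollary~\ref{lkascor} is invoked, and $m'\le n-1$, which follows from $d_s\le l_s+k\le i-1$.

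The main obstacle is justifying the step ``apply Corollary~\ref{lkascor} to one factor inside a product'', since we are in a quotient ring. This is harmless: the identity holds in $H^*(\mathcal{Y})$, so multiplying it by the remaining factors $x_{d_s}e_{k-j_s}(\ldots)$ is legitimate. The only care needed is bookkeeping of the conventions \eqref{convention of e} and \eqref{convention of 1.5} in degenerate cases, such as $j_t=j_{t+1}$ or an empty variable list when $d_{t}+1>d_{t+1}-1$. I would also check that all variables appearing are distinct. The segments $\{1,\ldots,d_1-1\},\{d_1\},\{d_1+1,\ldots,d_2-1\},\ldots$ partition $\{1,\ldots,m\}$, and $d_t<d_{t+1}$ because $l_t+j_t<l_{t+1}+j_{t+1}$. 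Hence every monomial in $e_k^{(l_1,\ldots,l_s)}$ is square free, which completes the argument.
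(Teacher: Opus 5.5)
Your proposal is correct and is essentially the paper's proof. Like the paper, you apply Theorem~\ref{xsqu} to the consecutive block, then absorb $x_{l_s},x_{l_{s-1}},\ldots,x_{l_1}$ one at a time via Corollary~\ref{lkascor}, each time applied to the inner factor $e_{j_{t+1}}(x_1,\ldots,x_{d_{t+1}-1})$ with the gap condition $l_t<(d_{t+1}-1)-j_{t+1}+1=l_{t+1}$; your explicit checks of the side conditions ($m'\ge 1$, $d_s\le i-1\le n-1$) are slightly more careful than the paper, which leaves them implicit.
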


\vspace{5pt}

\begin{remark}
{\rm When $b\le n-2$, the class $x_n$ does not appear in the right hand side so that the right hand side is a linear combination of tautological basis of $H^{*}(\mathcal{Y})$.
When $b=n-1$, it does appear in the right hand side. The expansion in terms of tautological basis in this case will be treated in Theorem~\ref{b=n-1gen}.
}
\end{remark}

\vspace{5pt}

\begin{proof}[Proof of Theorem~$\ref{thm12}$]
Since the left hand side and the right hand side have $x_M$ in common, it is enough to proove the equality without $x_M$. Let us write the left hand side of the desired equality (without $x_M$) as
\begin{align*}
x_{l_1,l_2,\ldots,l_s}x_a\cdots x_{c-1}x_c^2x_{c+1}\cdots x_b \ = \ x_{l_1,l_2,\cdots l_{s-1}}(x_{l_s}x_a\cdots x_{c-1}x_c^2x_{c+1}\cdots x_b) .
\end{align*}
Since $1\le l_s<a-1$, the monomial in the parenthesis can be computed by Theorem~\ref{xsqu} and Corollary~\ref{lkascor} as
\begin{align}\label{lsthm10}
\begin{split}
x_{l_s}&x_a\cdots x_{c-1}x_c^2x_{c+1}\cdots x_b\\ 
\ &\hspace{10pt}= \ (-1)^{c-a-1}\sum_{i=c}^b\bi{i-a}{c-a}x_{l_s}e_{i-a+1}(x_1,\ldots,x_{i-1})x_{i,\ldots,b}\\
\ &\hspace{40pt}+ \ (-1)^{c-a}\sum_{i=c}^b\bi{i-a}{c-a}x_{l_s}e_{i-a+1}(x_1,\ldots,x_{i-1})x_{i+1,\ldots,b+1} \ + \ x_{l_s}x_{a,\ldots,b+1} \\
\ &\hspace{10pt}= \ (-1)^{c-a-1}\sum_{i=c}^b\bi{i-a}{c-a}e_{i-a+1}^{(l_s)}(x_1,\ldots,x_{i-1})x_{i,\ldots,b}\\
\ &\hspace{40pt}+ \ (-1)^{c-a}\sum_{i=c}^b\bi{i-a}{c-a}e_{i-a+1}^{(l_s)}(x_1,\ldots,x_{i-1})x_{i+1,\ldots,b+1} \ + \ x_{l_s}x_{a,\ldots,b+1}.
\end{split}
\end{align}
Multiplying $x_{l_{s-1}}$ on both sides, the term $x_{l_{s-1}}e_{i-a+1}^{(l_s)}(x_1,\ldots,x_{i-1})$ appears. For this, we have
\begin{align*}
x_{l_{s-1}}&e_{i-a+1}^{(l_s)}(x_1,\ldots,x_{i-1}) \\
&\ = \ x_{l_{s-1}} \sum_{j_{s}=0}^{i-a+1}e_{j_{s}}(x_1,\ldots,x_{l_{s-1}+j_{s}-1})x_{l_{s}+j_{s}}e_{i-a-+1j_{s}}(x_{l_{s}+j_{s}+1},\ldots,x_{i-1}) 
\end{align*}
by the definition of $e_{i-a+1}^{(l_s)}(x_1,\ldots,x_{i-1})$.
Since $1\le l_{s-1}<(l_s+j_s-1)-j_s+1=l_s$, we can apply Corollary~\ref{lkascor} to the right hand side again. Then this equality becomes
\begin{align*}
x_{l_{s-1}}e_{i-a+1}^{(l_s)}(x_1,\ldots,x_{i-1}) 
&\ = \ \sum_{j_{s}=0}^{i-a+1}e_{j_{s}}^{(l_{s-1})}(x_1,\ldots,x_{l_{s-1}+j_{s}-1})x_{l_{s}+j_{s}}e_{i-a-+1j_{s}}(x_{l_{s}+j_{s}+1},\ldots,x_{i-1}) \\
&\ = \ e_{j_{s}}^{(l_{s-1},l_s)}(x_1,\ldots,x_{i-1}) .
\end{align*}
Hence, after multiplying $x_{l_{s-1}}$ on \eqref{lsthm10}, we obtain
\begin{align*}
x_{l_{s-1}}&x_{l_s}x_a\cdots x_{c-1}x_c^2x_{c+1}\cdots x_b\\ 
\ &= \ (-1)^{c-a-1}\sum_{i=c}^b\bi{i-a}{c-a}e_{i-a+1}^{(l_{s-1},l_s)}(x_1,\ldots,x_{i-1})x_{i,\ldots,b}\\
\ &\hspace{30pt}+ \ (-1)^{c-a}\sum_{i=c}^b\bi{i-a}{c-a}e_{i-a+1}^{(l_{s-1},l_s)}(x_1,\ldots,x_{i-1})x_{i+1,\ldots,b+1} \ + \ x_{l_{s-1}}x_{l_s}x_{a,\ldots,b+1}.
\end{align*}
Continuing this process, we obtain the desired equality.
\end{proof}

\vspace{10pt}

\begin{example}\label{ex4.5}
{\rm For $n=12$, let us express the polinomial
\begin{align}\label{ex4.6}
x_1x_3x_5^2x_6&x_8x_{10}
\end{align}
by square free monomials. Using Theorem~\ref{thm12} to $x_1x_3x_5^2x_6$ in \eqref{ex4.6}, we have
\begin{align*}
x_1x_3x_5^2x_6 \ &= \ -\sum_{i=5}^6e_{i-4}^{(1,3)}(x_1,\ldots,x_{i-1})x_{i,\ldots,6}\\
&\hspace{30pt}
+\sum_{i=5}^6e_{i-4}^{(1,3)}(x_1,\ldots,x_{i-1})x_{i+1,\ldots,7} \ + \ x_{1,3}x_{5,6,7}\\
\ &= \ -e_1^{(1,3)}(x_1,\ldots,x_4)x_{5,6}-e_2^{(1,3)}(x_1,\ldots,x_5)x_6\\
&\hspace{30pt}
+e_1^{(1,3)}(x_1,\ldots,x_4)x_{6,7}+e_2^{(1,3)}(x_1,\ldots,x_5)x_7 \ + \ x_{1,3}x_{5,6,7}.
\end{align*}
Expanding $e_1^{(1,3)}(x_1,\ldots,x_4)$ and $e_2^{(1,3)}(x_1,\ldots,x_5)$ by \eqref{defofepls}, we obtain
\begin{align*}
x_1x_3x_5^2x_6 \ &= \ -3x_{1,2,3,5,6}-6x_{1,2,4,5,6}-5x_{1,3,4,5,6}\\
&\hspace{20pt}+3x_{1,2,3,5,7}+4x_{1,2,4,5,7}+2x_{1,2,4,6,7}+3x_{1,3,4,5,7}+2x_{1,3,4,6,7}+x_{1,3,5,6,7}.
\end{align*}
Substitute the result for \eqref{ex4.5}, we have
\begin{align*}
x_1x_3x_5^2x_6x_8x_{10} \ &= \ -3x_{1,2,3,5,6,8,10}-6x_{1,2,4,5,6,8,10}-5x_{1,3,4,5,6,8,10}\\
&\hspace{20pt}+3x_{1,2,3,5,7,8,10}+4x_{1,2,4,5,7,8,10}+2x_{1,2,4,6,7,8,10}\\
&\hspace{20pt}+3x_{1,3,4,5,7,8,10}+2x_{1,3,4,6,7,8,10}+x_{1,3,5,6,7,8,10}.
\end{align*}
}
\end{example}

\vspace{10pt}

As mentioned in the beginning of Section~\ref{connected case}, the square free monomials in $x_1,\ldots,x_{n}$ are linearly dependent due to the relation $x_1+\cdots+x_n=0$. When $b=n-1$ in the previous theorem, square free monomials containing $x_n$ appears (as in Theorem~\ref{xsqu}).
To obtain the expansion in terms of tautological basis (i.e.\ the square free monomials in $x_1,\ldots,x_{n-1}$), we need to remove $x_n$ from these monomials in square free form. The next claim provides the resulting formula which the reader may compare with  Theorem~\ref{b=n-1}.

\begin{theorem}\label{b=n-1gen}
For $1\le l_1<l_2<\cdots<l_s<a-1<a\le c\le b=n-1$, the following holds in $H^{*}(\mathcal{Y})$:
\begin{align*}
x_{l_1,l_2,\ldots,l_s}&x_a\cdots x_{c-1}x_c^2x_{c+1}\cdots x_{n-1}\\ 
\ &\hspace{-20pt}= \ (-1)^{c-a-1}\sum_{i=c}^{n}\bi{i-a}{c-a}e_{i-a+1}^{(l_1,\ldots,l_s)}(x_1,\ldots,x_{i-1})x_{i,\ldots,n-1} .
\end{align*}
\end{theorem}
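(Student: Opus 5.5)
The plan is to mimic the passage from Theorem~\ref{xsqu} to Theorem~\ref{b=n-1}, with the ordinary elementary symmetric polynomials replaced by the decorated ones. There are two natural routes. The first applies Theorem~\ref{thm12} with $b=n-1$ and $M=\emptyset$, and then removes $x_n$ from the second and third summands. The second, which I would try first because it avoids manipulating $x_n$ inside $e^{(l_1,\ldots,l_s)}$, starts from Theorem~\ref{b=n-1}. That theorem already expresses $x_a\cdots x_{c-1}x_c^2x_{c+1}\cdots x_{n-1}$ in the tautological basis as
\begin{align*}
(-1)^{c-a-1}\sum_{i=c}^{n}\bi{i-a}{c-a}e_{i-a+1}(x_1,\ldots,x_{i-1})x_{i,\ldots,n-1}.
\end{align*}
We then multiply successively by $x_{l_s},x_{l_{s-1}},\ldots,x_{l_1}$, exactly as in the proof of Theorem~\ref{thm12}.

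The key steps are as follows. First, multiply the identity of Theorem~\ref{b=n-1} by $x_{l_s}$. The only products that are not yet square free are $x_{l_s}e_{i-a+1}(x_1,\ldots,x_{i-1})$, with $c\le i\le n$. Here $k=i-a+1$ and $m=i-1$, so $m-k+1=a-1>l_s$. Since $m=i-1\le n-1$, Corollary~\ref{lkascor} applies and gives $e^{(l_s)}_{i-a+1}(x_1,\ldots,x_{i-1})$. This polynomial involves only $x_1,\ldots,x_{i-1}$, so multiplying by $x_{i,\ldots,n-1}$ creates no squares. Second, multiply by $x_{l_{s-1}}$ and expand $e^{(l_s)}$ by its definition as a sum of terms $e_{j_s}(x_1,\ldots,x_{l_s+j_s-1})\,x_{l_s+j_s}\,e_{\ast}(\ldots)$. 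We need Corollary~\ref{lkascor} for $x_{l_{s-1}}e_{j_s}(x_1,\ldots,x_{l_s+j_s-1})$, which requires $l_{s-1}<(l_s+j_s-1)-j_s+1=l_s$, and this holds. The result is precisely $e^{(l_{s-1},l_s)}_{i-a+1}(x_1,\ldots,x_{i-1})$, by the nesting built into definition \eqref{defofepls}. Third, iterate down to $l_1$. This is the same induction as in the proof of Theorem~\ref{thm12}, and it yields the stated formula with the sum running up to $i=n$.

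The main obstacle is the boundary term $i=n$. There the factor is $e_{n-a+1}(x_1,\ldots,x_{n-1})$ with $x_{i,\ldots,n-1}=1$, and we must confirm that Corollary~\ref{lkascor} still applies with $m=n-1$. Its hypothesis $1\le m\le n-1$ holds, and $l_s<a-1=m-k+1$, so it does. We should also check that Proposition~\ref{lkas} and Lemma~\ref{lem5} were proved for $m=n-1$. Lemma~\ref{lem5} needs $t\le n-1$. In the telescoping of Proposition~\ref{lkas} the index $t$ goes up to $l+k-1$, and since $l+k\le m=n-1$ we get $t\le n-2$. So no $x_n$ is created, and the resulting monomials lie in $x_1,\ldots,x_{n-1}$.

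If this route ran into trouble, the fallback would be the first route. There we would use Proposition~\ref{piele} and Lemma~\ref{b=n-1 lem} to rewrite $x_{i+1,\ldots,n}$ and $x_{a,\ldots,n}$ as multiples of $e_{n-a+1}(x_1,\ldots,x_{n-1})$, carrying the prefactor $x_{l_1,\ldots,l_s}$ along. The coefficients would then be collected using Lemma~\ref{nikou}, exactly as in \eqref{b=n-13.5}. Finally, Corollary~\ref{lkascor} would be applied to convert $x_{l_1,\ldots,l_s}e_{\ast}$ into $e^{(l_1,\ldots,l_s)}_{\ast}$.
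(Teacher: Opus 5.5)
Your proposal is correct and matches the paper's proof: the paper also starts from Theorem~\ref{b=n-1}, multiplies by $x_{l_1,\ldots,l_s}$, and turns each $x_{l_1,\ldots,l_s}e_{i-a+1}(x_1,\ldots,x_{i-1})$ into $e^{(l_1,\ldots,l_s)}_{i-a+1}(x_1,\ldots,x_{i-1})$ by the same iterated use of Corollary~\ref{lkascor} as in the proof of Theorem~\ref{thm12}. The paper leaves implicit your check that the boundary term $i=n$ (with $m=n-1$) satisfies the hypotheses of Corollary~\ref{lkascor} and produces no $x_n$, and your fallback route through Theorem~\ref{thm12} is not needed.
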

\begin{proof}
Since $1\le a\le c\le b=n-1$, the monomial $x_a\cdots x_{c-1}x_c^2x_{c+1}\cdots x_{n-1}$ in the left hand side can be expanded by the tautological basis by Theorem~$\ref{b=n-1}$. Namely, we have
\begin{align*}
x_{l_1,l_2,\ldots,l_s}&x_a\cdots x_{c-1}x_c^2x_{c+1}\cdots x_{n-1}\nonumber\\
\ &= \ x_{l_1,l_2,\ldots,l_s}(-1)^{c-a-1}\sum_{i=c}^n\bi{i-a}{c-a}e_{i-a+1}(x_1,\ldots,x_{i-1})x_{i,\ldots,n-1}\nonumber\\
\ &= \ (-1)^{c-a-1}\sum_{i=c}^n\bi{i-a}{c-a}x_{l_1,l_2,\ldots,l_s}e_{i-a+1}(x_1,\ldots,x_{i-1})x_{i,\ldots,n-1} .
\end{align*}
By the same argument as that used in the previous proof, we have
\begin{align*}
x_{l_1,l_2,\ldots,l_s}e_{i-a+1}(x_1,\ldots,x_{i-1})
= e_{i-a+1}^{(l_1,\ldots,l_s)}(x_1,\ldots,x_{i-1})
\end{align*}
so that we obtain the desired equality.
\end{proof}

\vspace{10pt}

\begin{example}
{\rm For $n=7$ and $l_1=1,l_2=3,a=c=5,b=6$, we obtain
\begin{align*}
x_1x_3x_5^2&x_6\\
\ &= \ -\sum_{i=5}^{7}e_{i-4}^{(1,3)}(x_1,\ldots,x_{i-1})x_{i,\ldots,6}\\
\ &= \ -e_1^{(1,3)}(x_1,\ldots,x_4)x_{5,6}-e_2^{(1,3)}(x_1,\ldots,x_5)x_6-e_3^{(1,3)}(x_1,\ldots,x_6)\\
\ &= \ -4x_{1,2,3,4,6}-9x_{1,2,3,5,6}-12x_{1,2,4,5,6}-9x_{1,3,4,5,6}.
\end{align*}
Having $n=7$ in mind (so that $x_7=-(x_1+\cdots+x_6)$), one may compare this with expand of $x_1x_3x_5^2x_6$ in Example~\ref{ex4.5}.
}
\end{example}

\vspace{10pt}

Recall that we have $x_J=x_{j_1}\cdots x_{j_m}$ for $J=\lbrace j_1<\cdots<j_m\rbrace\subseteq\lbrace 1,\ldots,n-1\rbrace$ and that we have the multiplication rule for the tautological basis:
\begin{align*}
x_J\cdot x_K \ = \ \sum_Lc_{J,K}^Lx_L
\end{align*}
for $J,K\subseteq\lbrace 1,\ldots,n-1\rbrace$.
As we pointed out in Remark~\ref{remarkcoe}, Theorem~\ref{thm12} (and Theorem~\ref{b=n-1gen} as well) provides square free expansion formulas for the case $|J|=1$ with an arbitrary $K$, but the exact coefficients for $x_L$ is not directly obtained from our formula. However, all coefficients of the square free monomials in our expansion formula are integers. Hence, by repeating these formulas, we obtain the following claim.

\begin{corollary}
We have $c_{J,K}^L\in\mathbb{Z}$ for all $J,K,L\subseteq\lbrace 1,\ldots,n-1\rbrace$.
\end{corollary}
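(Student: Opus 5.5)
It suffices to show that for any $J=\{j\}$ with $|J|=1$ and any square free monomial $x_K$, the product $x_j x_K$ expands in the tautological basis with integer coefficients. Indeed, $x_J x_K$ for general $J=\{j_1<\cdots<j_m\}$ can be computed as $x_{j_1}(x_{j_2}(\cdots(x_{j_m}x_K)))$. Once the single-variable case is settled, each step sends a $\mathbb{Z}$-linear combination of tautological basis elements to another such combination. Since the tautological basis is a $\mathbb{Q}$-basis (Proposition~\ref{xbase}), the resulting coefficients are exactly $c_{J,K}^L$, so they are integers. The argument is therefore an induction on $|J|$, with all the content in the case $|J|=1$.

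For $|J|=1$, if $j\notin K$ then $x_jx_K=x_{K\cup\{j\}}$ is already a basis element. If $j\in K$, decompose $K$ into maximal runs of consecutive integers and let $\{a,\ldots,b\}$ be the run containing $c:=j$. Write $K=\{l_1<\cdots<l_s\}\sqcup\{a,\ldots,b\}\sqcup M$ with all $l_t<a-1$ and $M\subseteq\{b+2,\ldots,n-1\}$; this is possible because the run is maximal. Then $x_cx_K$ has exactly the form on the left hand side of Theorem~\ref{thm12} (when $b\le n-2$) or of Theorem~\ref{b=n-1gen} (when $b=n-1$, in which case $M=\emptyset$).

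The right hand sides of these theorems involve binomial coefficients, signs, and the polynomials $e^{(l_1,\ldots,l_s)}_{k}(x_1,\ldots,x_{i-1})$. By the definition \eqref{defofepls}, these polynomials are sums of products of ordinary elementary symmetric polynomials in disjoint consecutive blocks of variables, separated by single variables $x_{d_t}$. Hence they are $\mathbb{Z}$-combinations of square free monomials in $x_1,\ldots,x_{i-1}$. Each such monomial is then multiplied by $x_{i,\ldots,b}x_M$, $x_{i+1,\ldots,b+1}x_M$, or $x_{i,\ldots,n-1}$. These factors involve indices at least $i$, so every product is again square free, and it avoids $x_n$ when $b\le n-2$ (the largest index is $b+1\le n-1$, with $M$ in $\{b+2,\ldots,n-1\}$). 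The term $x_{l_1,\ldots,l_s}x_{a,\ldots,b+1}x_M$ is also square free. So every term is an integer multiple of a tautological basis element.

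The main point to check is this disjointness and range bookkeeping. One needs to confirm that in \eqref{defofepls} the indices satisfy $d_s<i$ when the upper variable is $x_{i-1}$, so that no variable is repeated after the final multiplication. For $b=n-1$, one needs to confirm that Theorem~\ref{b=n-1gen} indeed eliminates $x_n$, with the term $i=n$ contributing $e^{(l_1,\ldots,l_s)}_{n-a+1}(x_1,\ldots,x_{n-1})$ alone. Both follow from the stated theorems, and integrality of $\bi{i-a}{c-a}$ completes the argument.
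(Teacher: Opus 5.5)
Your proposal is correct and follows the paper's own argument: the paper likewise treats $|J|=1$ via Theorem~\ref{thm12} and Theorem~\ref{b=n-1gen}, observes that every coefficient in those expansions is an integer, and iterates. Your decomposition of $K$ into maximal runs makes explicit why every $x_cx_K$ with $c\in K$ falls under one of those two theorems. The index check you flag follows directly from \eqref{defofepls}, not from the theorems: the $d_t$ are strictly increasing and $d_s=l_s+j_s\le (a-2)+(i-a+1)=i-1$.
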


\vspace{20pt}

\section{Open questions}\label{opque}

In this section, we list some open questions. 
\begin{question}
How can we write the structure constants $c_{J,K}^L$ explicitly when $|J|=1$?
\end{question}

\vspace{5pt}

Regarding this question, we have the following answer for the consecutive case \eqref{conxeq} (but not for a general case).
\begin{proposition}\label{cor5.2}
For $1\le a\le c\le b\le n-1$, the following holds in $H^{*}(\mathcal{Y})$$:$
\begin{equation}\label{cor5.2eq}
\begin{split}
x_c(x_a\dotsb &x_b)
\ = \ (-1)^{c-a-1}\!\!\sum_{i=a-1}^{b}\Biggl(\bi{b-a+1}{c-a+1}-\bi{i-a}{c-a+1}\Biggl)e_{i-a+1}(x_1,\dotsb,x_{i-2})x_{i,\dotsb,b}\\
&\hspace{60pt}+(-1)^{c-a}\sum_{i=c}^{b}\bi{i-a}{c-a}e_{i-a+1}(x_1,\dotsb,x_{i-1})x_{i+1,\dotsb,b+1}\ +\ x_{a,\dotsb,b+1},
\end{split}
\end{equation}
where we set $\bi{h}{m}=0$ when $h<m$ and we have $e_0(x_1,\dotsb,x_{i-2})=1$ by convention \eqref{convention of e}.
\end{proposition}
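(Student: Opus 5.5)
The plan is to start from Theorem~\ref{xsqu} and leave its second and third summands unchanged. They already coincide with the second and third summands of \eqref{cor5.2eq}. So the whole task is to rewrite the first summand
\begin{align*}
(-1)^{c-a-1}\sum_{i=c}^{b}\bi{i-a}{c-a}e_{i-a+1}(x_1,\ldots,x_{i-1})x_{i,\ldots,b}
\end{align*}
so that each square free monomial is attributed to a unique ``starting index'' of its consecutive tail. This is exactly the overlap issue pointed out in Remark~\ref{remarkcoe}.

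The key step is a purely polynomial identity, with no relations of $H^*(\mathcal{Y})$ needed. Split each square free monomial of $e_{k}(x_1,\ldots,x_{i-1})$ according to the length $t\ge 0$ of its maximal consecutive block ending at $x_{i-1}$:
\begin{align*}
e_{k}(x_1,\ldots,x_{i-1})=\sum_{t\ge 0}e_{k-t}(x_1,\ldots,x_{i-t-2})\,x_{i-t,\ldots,i-1}.
\end{align*}
In this sum $x_{i-t-1}$ is absent, and we use the conventions \eqref{convention of e}, \eqref{convention of 1.5}. This identity is obtained by iterating \eqref{convention of e 2}.

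Multiplying by $x_{i,\ldots,b}$ and putting $j=i-t$, we obtain
\begin{align*}
e_{i-a+1}(x_1,\ldots,x_{i-1})x_{i,\ldots,b}=\sum_{j=a-1}^{i}e_{j-a+1}(x_1,\ldots,x_{j-2})\,x_{j,\ldots,b}.
\end{align*}
The lower limit $j=a-1$ comes from the degree $j-a+1\ge 0$. I will check the boundary terms separately: the case $j=a-1$, and the small cases $j\le 2$ where $x_1,\ldots,x_{j-2}$ is empty.

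Next, substitute this into the first summand and interchange the order of summation. The coefficient of $e_{j-a+1}(x_1,\ldots,x_{j-2})x_{j,\ldots,b}$ then becomes
\begin{align*}
\sum_{i=\max(c,j)}^{b}\bi{i-a}{c-a}.
\end{align*}
The hockey stick identity $\sum_{h=c-a}^{N}\bi{h}{c-a}=\bi{N+1}{c-a+1}$ evaluates this. For $j\ge c$ it gives $\bi{b-a+1}{c-a+1}-\bi{j-a}{c-a+1}$. For $a-1\le j<c$ the sum runs over all $i$ from $c$ to $b$ and equals $\bi{b-a+1}{c-a+1}$. This agrees with the claimed formula, because $\bi{j-a}{c-a+1}=0$ by the stated convention whenever $j-a<c-a+1$; the case $j=a-1$ has top entry $-1$ and is covered by the same convention.

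The main obstacle I expect is bookkeeping rather than ideas. I need to confirm that the range $a-1\le j\le b$ is exactly right, that the conventions for empty variable lists and negative binomial entries match at the boundaries, and that the sign $(-1)^{c-a-1}$ carries through unchanged.
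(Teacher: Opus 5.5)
Your argument is correct and is exactly the proof the paper only sketches: it says one just ``collects the same monomials'' in Theorem~\ref{xsqu}, with no relations of $H^*(\mathcal{Y})$ needed. Your block decomposition of $e_{i-a+1}(x_1,\ldots,x_{i-1})$, the interchange of summations and the hockey-stick evaluation all check out.

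The one boundary point to record is the range of your decomposition. It actually runs over $\max(1,a-1)\le j\le i$, so the case $a=1$ needs a remark. There the whole first summand of Theorem~\ref{xsqu} vanishes, because $e_i(x_1,\ldots,x_{i-1})=0$. Meanwhile the $i=a-1=0$ term of \eqref{cor5.2eq} has coefficient $\binom{b}{c}\neq 0$, so the statement holds only under the implicit convention $x_0=0$.
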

\vspace{10pt}

Note that the coefficient of the first summand in the right hand side of \eqref{cor5.2eq} is nonnegative:
\begin{align*}
\bi{b-a+1}{c-a+1}-\bi{i-a}{c-a+1} \ \ge \ 0.
\end{align*}
The difference between \eqref{cor5.2eq} and \eqref{xsqueq} is as follows: in the first summand of \eqref{cor5.2eq}, the last index $i-2$ in $e_{i-a+1}(x_1,\ldots,x_{i-2})$ and the first index $i$ in $x_{i,\ldots,b}$ are not consective. Therefore, the issue in  Remark~\ref{remarkcoe} does not occur in the formula \eqref{cor5.2eq}. That is, it gives us the coefficients for the tautological basis directly.

To prove \eqref{cor5.2eq}, we collect the same monomials, and we do not need to use non-trivial relations in $H^*(\mathcal{Y})$. To keep this paper short, we rather not write down the proof. (It would be interesting to start with giving a proof of \eqref{cor5.2eq}.)
Instead, let us give a convincing example.
Let $n=7$ and $a=3,c=4,b=5$. Then we have
\begin{align*}
x_4(x_3x_4x_5) \ &= \ 
\sum_{i=2}^{5}\Biggl(\bi{3}{2}-\bi{i-3}{2}\Biggl)e_{i-2}(x_1,\dotsb,x_{i-2})x_{i,\dotsb,5}\nonumber\\
&\hspace{60pt}-\sum_{i=4}^{5}\bi{i-3}{2}e_{i-2}(x_1,\dotsb,x_{i-1})x_{i+1,\dotsb,6}+x_{3,4,5,6}\nonumber\\
\ &= \ 3x_{2,3,4,5}+3e_1(x_1)x_{3,4,5}+3e_2(x_1,x_2)x_{4,5}+2e_3(x_1,x_2,x_3)x_5\nonumber\\
&\hspace{14pt}-e_{2}(x_1,\ldots,x_3)x_{5,6}-2e_{4}(x_1,\ldots,x_4)x_{6}+x_{3,4,5,6} .
\end{align*}
One can easily see that there are no overlapping terms, in contrast to \eqref{xsqueq}.
By writing elementary symmetric polynomials explicitly, we obtain
\begin{align*}
x_4(x_3x_4x_5) \ &= \ 2x_{1,2,3,5}+3x_{1,2,4,5}+3x_{1,3,4,5}+3x_{2,3,4,5}\\
&\hspace{14pt}-2x_{1,2,3,6}-2x_{1,2,4,6}-x_{1,2,5,6}-2x_{1,3,4,6}-x_{1,3,5,6}-2x_{2,3,4,6}-x_{2,3,5,6}+x_{3,4,5,6}.
\end{align*}
We address that this results agrees with the computation given in Example~\ref{exxsq}.

\vspace{10pt}
It is also interesting to study the structure constants $c_{J,K}^L$ for the general case (i.e.\ $|J|\ge1$). 
\begin{question}
Can we write down an explicit formula for $c_{J,K}^L$?
\end{question}

The main tool used in this paper is mostly algebraic computations. We now suggest the next question.

\begin{question}
What is the geometric or combinatorial meaning of $c_{J,K}^L$? 
\end{question}

\newpage
\section{Appendix}

\begin{lemma}\label{nikouapp}
For $d>b\ge0$, we have
\begin{align}\label{indhypapp}
\sum_{i=b}^{d-1}(-1)^{j-b}\bi{j}{b}\bi{d}{j+1} \ = \ 1.
\end{align}
\end{lemma}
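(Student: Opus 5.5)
We need to prove $\sum_{j=b}^{d-1}(-1)^{j-b}\binom{j}{b}\binom{d}{j+1}=1$ for $d>b\ge 0$. The summation variable is $j$ (the displayed $i$ in the lower limit is a typo). Our plan is a direct algebraic argument using the Vandermonde-type identity coming from $\binom{j}{b}$ and an alternating-sum evaluation, with induction on $d$ as a fallback.

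\textbf{Step 1.} Substitute $k=j+1$ and write
\begin{align*}
S(d,b)\coloneqq\sum_{k=b+1}^{d}(-1)^{k-1-b}\bi{k-1}{b}\bi{d}{k}.
\end{align*}
The natural tool is the expansion
\begin{align*}
\bi{k-1}{b}=\sum_{m=0}^{b}(-1)^{b-m}\bi{k}{m},
\end{align*}
which follows from iterating Pascal's rule $\bi{k-1}{b}=\bi{k}{b}-\bi{k-1}{b-1}$. Substituting this in and extending the range of $k$ to $1\le k\le d$ does not change $S(d,b)$, because $\bi{k-1}{b}=0$ for $1\le k\le b$. Then interchange the sums, which leaves, for each $m$, the inner sum
\begin{align*}
\sum_{k=1}^{d}(-1)^{k-1}\bi{d}{k}\bi{k}{m}.
\end{align*}

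\textbf{Step 2.} Evaluate this inner sum using $\bi{d}{k}\bi{k}{m}=\bi{d}{m}\bi{d-m}{k-m}$ and the vanishing of $\sum_{r}(-1)^r\bi{d-m}{r}$ for $d-m>0$.
\begin{itemize}
\item For $1\le m\le b<d$, the sum over $k\ge m$ is $0$.
\item For $m=0$, the $k=0$ term is absent, so the sum $\sum_{k=1}^d(-1)^{k-1}\bi{d}{k}$ equals $1$.
\end{itemize}
Only $m=0$ survives. It carries the sign $(-1)^{b}$ from the expansion together with the sign $(-1)^{-b}$ from the outer factor $(-1)^{k-1-b}$, and these combine to $1$.

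\textbf{Main obstacle.} The bookkeeping of signs and summation ranges is where an error is most likely. In particular, one must check that extending $k$ down to $1$ is harmless, which holds because $\bi{k-1}{b}=0$ there, and that the $m=0$ term contributes exactly $+1$. The hypothesis $d>b$ is used precisely here: it guarantees $d-m\ge 1$ for every $m\le b$, so each alternating sum with $m\ge1$ vanishes.

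\textbf{Fallback.} If the direct approach is awkward, we would instead induct on $d$. Apply $\bi{d+1}{j+1}=\bi{d}{j+1}+\bi{d}{j}$. This gives $S(d+1,b)=S(d,b)+T$, where $T=\sum_{j}(-1)^{j-b}\bi{j}{b}\bi{d}{j}$. Here $T=0$ for $d>b$ by the same binomial-product identity $\bi{d}{j}\bi{j}{b}=\bi{d}{b}\bi{d-b}{j-b}$. The base case is $d=b+1$, where the sum is the single term $\bi{b}{b}\bi{b+1}{b+1}=1$.
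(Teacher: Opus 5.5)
Your direct argument is correct, and it takes a genuinely different route from the paper's proof. The paper inducts on $d$. It applies Pascal's rule to the \emph{second} factor, $\binom{d+1}{i+1}=\binom{d}{i+1}+\binom{d}{i}$, and uses the inductive hypothesis on the first piece. The correction term is killed via $\sum_{i}(-1)^{i-b}\binom{i}{b}\binom{d}{i}=\binom{d}{b}\sum_{j}(-1)^{j}\binom{d-b}{j}=0$. That is exactly your fallback.

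Your main argument instead expands the \emph{first} factor, $\binom{k-1}{b}=\sum_{m=0}^{b}(-1)^{b-m}\binom{k}{m}$, and evaluates the whole sum in one stroke. After interchanging, the net sign in front of the $m$-th inner sum is $(-1)^{m}$, so your sign bookkeeping checks out. Each inner sum $\sum_{k}(-1)^{k-1}\binom{d}{k}\binom{k}{m}$ vanishes for $1\le m\le b$, by the same identity $\binom{d}{k}\binom{k}{m}=\binom{d}{m}\binom{d-m}{k-m}$ and a vanishing alternating row sum. The $m=0$ term contributes exactly $1$.

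Both proofs rest on these same two ingredients. Yours is non-inductive and shows precisely where $d>b$ enters: it guarantees $d-m\ge 1$ for every $m\le b$. The paper's version is slightly shorter to write.

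There is one point in favour of how you set up the fallback: you anchor the induction at $d=b+1$ for fixed $b$. The paper inducts from $d=1$ over all $b<d$, so the step from $d$ to $d+1$ must also produce the case $b=d$. In that case the inductive hypothesis is unavailable, since $\sum_{i=d}^{d-1}$ is empty and equals $0$. Also the ``vanishing'' alternating sum there has $l=d-b=0$ and equals $1$. The identity still holds, but the paper's argument does not literally cover that case, whereas your base case does.
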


\begin{proof}
We prove the claim by induction on $d\ge1$.
When $d=1$, we have $b=0$ so that the claim is obvious.

We assume that the claim holds for some $d\ge1$, that is, we assume that \eqref{indhypapp} holds, and we prove the same equality with $d$ replaced by $d+1$. For that, we have
\begin{align*}
\sum_{i=b}^{d}(-1)^{i-b}\bi{i}{b}\bi{d+1}{i+1} 
\ &= \ \sum_{i=b}^{d-1}(-1)^{i-b}\bi{i}{b}\bi{d+1}{i+1}+(-1)^{d-b}\bi{d}{b}\\
\ &= \ \sum_{i=b}^{d-1}(-1)^{i-b}\bi{i}{b}\left\{\bi{d}{i+1}+\bi{d}{i}\right\}+(-1)^{d-b}\bi{d}{b}\\
\ &= \ 1\ +\ \sum_{i=b}^{d-1}(-1)^{i-b}\bi{i}{b}\bi{d}{i}\ +\ (-1)^{d-b}\bi{d}{b}\\
&\hspace{40pt}\text{(by applying \eqref{indhypapp} to the first summand)}\\
\ &= \ 1 \ +\ \sum_{i=b}^{d}(-1)^{i-b}\bi{i}{b}\bi{d}{i} .\\
\end{align*}
Here, the first summand can be computed as
\begin{align*}
\sum_{i=b}^{d}(-1)^{i-b}\bi{i}{b}\bi{d}{i} 
\ &= \ \sum_{i=b}^{d}(-1)^{i-b}\frac{i!}{b!(i-b)!}\frac{d!}{i!(d-i)!}\\
\ &= \ \sum_{i=b}^{d}(-1)^{i-b}\frac{d!}{b!(d-b)!}\frac{(d-b)!}{(i-b)!(d-i)!}\\
\ &= \ \bi{d}{b}\sum_{i=b}^{d}(-1)^{i-b}\bi{d-b}{i-b}\\
\ &= \ \bi{d}{b}\sum_{j=0}^{l}(-1)^j\bi{l}{j} \qquad (l\coloneqq d-b)\\
\ &= 0 .
\end{align*}
Hence, the first summand in the previous equality vanishes, and we obtain the desired equality. So the proof completes by induction.
\end{proof}


\vspace{30pt}

\begin{thebibliography}{9}
\bibitem{b}
H. Abe, T. Horiguchi, H. Kuwata and H. Zeng, \emph{Geometry of Peterson Schubert calculus in type A and left-right diagrams}, Algebr. Comb. \textbf{7} (2024), no. 2, 383--412.
\bibitem{c}
H. Abe and H. Zeng, \emph{The integral cohomology rings of Peterson varieties in type A}, to appear in Acta Mathematica Sinica.
\bibitem{j}
E. Drellich, \emph{Monk's Rule and Giambelli's Formula for Peterson Varieties of All Lie Types}, J. Algebraic Combin. \textbf{41} (2015), no. 2, 539--575.
\bibitem{a}
Y. Fukukawa, M. Harada, and M. Masuda, 
\emph{The equivariant cohomology rings of Peterson varieties}, 
J. Math. Soc. Japan \textbf{67} (2015), no. 3, 1147--1159. 
\bibitem{h}
R. Goldin and B. Gorbutt, \emph{A positive formula for type A Peterson Schubert calculus}, Matematica \textbf{1} (2022), no. 3, 618--665.
\bibitem{i}
R. Goldin, L. Mihalcea and R. Singh, \emph{Positivity of Peterson Schubert Calculus}, Adv. Math. \textbf{455} (2024), Paper No. 109879, 34 pp.
\bibitem{g}
Tao Gui, Yuqi Jia, Xinkai Yu, Zhexi Zhang, Yuchen Zhu, 
\emph{Structure constants of Peterson Schubert calculus},
arXiv:2508.05457.
\bibitem{l}
M. Harada, T. Horiguchi, and M. Masuda, \emph{The equivariant cohomology rings of Peterson varieties in all Lie types}, 
Canad. Math. Bull. \textbf{58} (2015), no. 1, 80--90. 
\bibitem{k}
M. Harada, T. Horiguchi, S. Murai, M. Precup, J. Tymoczko, \emph{A filtration on the cohomology rings of regular nilpotent Hessenberg varieties},
Math. Z. \textbf{298} (2021), no. 3--4, 1345--1382.
\bibitem{m}
T. Horiguchi, \emph{Mixed Eulerian numbers and Peterson Schubert calculus}, Int. Math. Res. Not. \textbf{2024} (2), pp. 1422--1471
\end{thebibliography}
\end{document}